\documentclass{amsart}
\usepackage[hmargin=3.5cm,vmargin=3.5cm]{geometry}
\usepackage[utf8]{inputenc}

\usepackage{amsmath
,amssymb
,amsthm
,stmaryrd
, mathtools, mathrsfs,
relsize
}
\usepackage{graphicx,svg,xcolor}
\usepackage{enumerate,blindtext}
\usepackage[linktocpage=true, backref = page]{hyperref}
\usepackage{todonotes, lipsum}
\usepackage[all,cmtip]{xy}
\usetikzlibrary{matrix}

\makeatletter
\g@addto@macro\bfseries{\boldmath}
\makeatother

\newtheorem{theorem}{Theorem}[section]
\newtheorem{lemma}[theorem]{Lemma}
\newtheorem{korollar}[theorem]{Corollary}

\newtheorem{proposition}[theorem]{Proposition}
\theoremstyle{definition}
\newtheorem{example}[theorem]{Example}
\newtheorem{remark}[theorem]{Remark}
\newtheorem{definition}[theorem]{Definition}

\newtheorem*{theorem*}{Theorem}

\newcommand{\G}{\ensuremath{\Gamma}}
\newcommand{\g}{\ensuremath{\gamma}}

\newcommand{\C}{\ensuremath{\mathbb{C}}}

\newcommand{\N}{\ensuremath{\mathbb{N}}}

\newcommand{\ov}{\ensuremath{\overline}}
\newcommand{\mf}{\ensuremath{\mathfrak}}
\newcommand{\mc}{\ensuremath{\mathcal}}

\newcommand{\R}{\ensuremath{\mathbb{R}}}
\newcommand{\inv}{\ensuremath{^{-1}}}

\renewcommand{\Re}{\ensuremath{\operatorname{Re}}}

\newcommand{\on}{\ensuremath{\operatorname}}

\DeclareMathOperator{\ad}{ad}

\DeclareMathOperator{\w}{w}

\DeclareMathOperator{\HC}{HC}

\makeatletter
\newcommand*\bigcdot{\mathpalette\bigcdot@{.5}}
\newcommand*\bigcdot@[2]{\mathbin{\vcenter{\hbox{\scalebox{#2}{$\m@th#1\bullet$}}}}}
\makeatother

\setlength{\parindent}{0em}
\setlength{\parskip}{\medskipamount}

\author{Tobias Weich}
\thanks{T. Weich: Paderborn University, Warburger Straße 100, 33100 Paderborn, Germany, e-mail: \href{mailto:weich@math.upb.de}{weich@math.upb.de}}

\author{Lasse L. Wolf} 
\thanks{L. L. Wolf (corresponding author): Paderborn University, Warburger Straße 100, 33100 Paderborn, Germany, e-mail: \href{mailto:llwolf@math.upb.de}{llwolf@math.upb.de}}

\title{Absence of principal eigenvalues for higher rank locally
symmetric spaces}
%\email{weich@math.upb.de, llwolf@math.upb.de}
\begin{document}
 \begin{abstract}
Given a geometrically finite hyperbolic surface of infinite volume it is a classical result of Patterson that the
positive Laplace-Beltrami operator has no $L^2$-eigenvalues $\geq 1/4$. 
In this article we prove a generalization of this result for the 
joint $L^2$-eigenvalues of the algebra of commuting differential
operators on Riemannian locally symmetric spaces $\Gamma\backslash G/K$ of higher rank. We derive dynamical assumptions on the $\Gamma$-action on the geodesic and the Satake compactifications which imply the absence of the corresponding principal eigenvalues. A large class of examples fulfilling these assumptions are the non-compact quotients by Anosov subgroups. 
 \end{abstract}

\maketitle

%\tableofcontents
\section{Introduction}
Let $\mathbb H =SL(2,\R)/SO(2)$ be the hyperbolic plane equipped with the Riemannian metric of constant negative curvature and $\Gamma\subset SL(2,\R)$ a discrete torsion-free subgroup. 
Then $\Gamma\backslash \mathbb H$ is a Riemannian surface of constant negative curvature and the relations between the geometry of $\Gamma\backslash \mathbb H$, the group theoretic properties of $\Gamma$, the dynamical properties of the $\Gamma$-action on $\mathbb H$ or its compactification, and the spectrum of the positive Laplace-Beltrami operator $\Delta$ have been intensively studied over several decades. 
Let us focus on the discrete $L^2$-spectrum of the Laplace-Beltrami operator, i.e. those $\mu\in \R$ such that $(\Delta-\mu)f=0$ for some $f\in L^2(\Gamma\backslash \mathbb H)$, $f\neq 0$. If $\Gamma\subset SL(2,\R)$ is cocompact, then $\mu_0=0$ is always an eigenvalue corresponding to the constant function and Weyl's law for the elliptic selfadjoint operator $\Delta$ implies that there is a discrete set of infinitely many eigenvalues $0=\mu_0<\mu_1\leq\ldots$ of finite multiplicity. 
From a representation theoretic perspective there is a clear distinction between $\mu_i\in \,]0,1/4[$ and $\mu_i\geq 1/4$. The former correspond to complementary series representations and the latter to principal series representations occurring in $L^2(\Gamma\backslash SL(2,\mathbb R))$. We call the eigenvalues accordingly principal eigenvalues (if $\mu_i\geq1/4$) and complementary or exceptional eigenvalues (if $\mu_i\in ]0,1/4[$).
Merely by discreteness of the spectrum we know that there are at most finitely many complementary eigenvalues and infinitely many principal eigenvalues. 

If we pass to non-compact $\Gamma\backslash \mathbb H$, the situation becomes more intricate: For the modular surface $SL(2,\mathbb Z)\backslash \mathbb H$, which is non-compact but of finite volume, it is well known that there are no complementary eigenvalues but still infinitely many principal eigenvalues obeying a Weyl asymptotic. 
In general the question of existence of principal eigenvalues on finite volume hyperbolic surfaces is wide open. A long standing conjecture by Phillips and Sarnak \cite{Sarnak85}
states that for a generic lattice $\Gamma \subset SL(2,\mathbb R)$ there should be no principal eigenvalues.

If we pass to hyperbolic surfaces of infinite volume the situation is much better understood. A classical theorem by Patterson \cite{Patterson} states that if $\mathrm{vol}(\Gamma\backslash \mathbb H) = \infty$ and $\Gamma\subset SL(2,\mathbb R)$ is geometrically finite, then there are no principal eigenvalues. The result has later been generalized to real hyperbolic spaces of higher dimensions by Lax and Phillips \cite{laxphil}. Even if we are not aware of a reference, it seems folklore that the statement holds for general rank one locally symmetric spaces. 

In this article we are interested in a generalization of Patterson's theorem to higher rank locally symmetric spaces: 
                                                                                              
Let us briefly\footnote{A more detailed description of the setting will be provided in Section~\ref{sec:notation}.} introduce the setting: Let $X=G/K$ be a Riemannian symmetric space of non-compact type and 
$\Gamma\subset G$ a discrete torsion-free subgroup. We will 
be interested in the $L^2$-spectrum of the locally symmetric space 
$\Gamma\backslash X$. As for hyperbolic surfaces the Laplace-Beltrami operator is a canonical geometric differential operator whose spectral theory can be studied. If the symmetric space is of higher rank, there are however further $G$-invariant differential operators on $X$ that descend to differential operators on $\Gamma\backslash X$. 
It is from many perspectives more desirable to study the spectral theory of the whole algebra of invariant differential operators $\mathbb D(G/K)$ instead of just the spectrum of the Laplacian. In order to introduce the definition of the joint spectrum of $\mathbb D(G/K)$ we recall that $\mathbb D(G/K)$ is a commutative algebra generated by $r\geq 1$ algebraically independent differential operators and $r$ equals the rank of the symmetric space $X$. 
After a choice of generating differential operators a joint eigenvalue of these commuting differential operators would be given by an element in $\C^r$. 
A more intrinsic way of defining the spectrum which does not require to choose any generators, is provided by the Harish-Chandra isomorphism. 
This is an algebra isomorphism $\HC: \mathbb D(G/K) \to \text{Poly}(\mathfrak a^*)^W$ between the invariant differential operators and the complex-valued Weyl group invariant polynomials on the dual of $\mathfrak a =\rm{Lie}(A)$, where $A$ is the abelian subgroup of $G$  in the Iwasawa decomposition $G=KAN$. 
If we fix $\lambda \in \mathfrak a^*$ and compose the Harish-Chandra isomorphism with the evaluation of the polynomial at $\lambda$ we obtain a character $\chi_\lambda:= \text{ev}_\lambda\circ \HC : \mathbb D(G/K)\to \mathbb C$. With this notation we call $\lambda\in\mathfrak a_\C^*$ a joint $L^2$-eigenvalue on $\Gamma\backslash X$ if there exists 
$f\in L^2(\Gamma \backslash X)$ such that for all $D\in \mathbb D(G/K)$:
\[
 Df=\chi_\lambda(D) f.
\]
As for the hyperbolic surfaces we can distinguish two kinds of $L^2$-eigenvalues: The purely imaginary joint eigenvalues $\lambda \in i\mathfrak a^*$ correspond to principal series representations and we call them \emph{principal joint $L^2$-eigenvalues}. The remaining eigenvalues are called \emph{complementary} or \emph{exceptional} eigenvalues. These two kind of eigenvalues are not only distinguished by representation theory, but they also behave differently from the point of view of spectral theory: In their seminal paper \cite{DKV}, Duistermaat, Kolk and Varadarajan 
consider the case of cocompact discrete subgroups $\Gamma\subset G$. They prove that there exist infinitely many principal joint eigenvalues and their asymptotic growth is precisely described by a Weyl law with a remainder term. They furthermore prove an upper bound on the number of complementary eigenvalues whose growth rate is strictly inferior than the Weyl asymptotic of the principal eigenvalues.
There are thus much less complementary than principal eigenvalues.

The most prominent non-compact higher rank locally symmetric space is without doubt $\G\backslash X =SL(n,\mathbb Z) \backslash SL(n,\mathbb R)/ SO(n)$. By \cite{Mue07} (and in a more general setting by \cite{LindenVenkatesh}) it is known that there are infinitely many joint $L^2$-eigenvalues. Assuming the generalized Ramanujan conjecture which implies the absence of complementary eigenvalues (see e.g. \cite{Blomerbrumley}), we would get infinitely many principal joint $L^2$-eigenvalues. If one replaces the full modular group by a congruence subgroup $\G(n)$ of level $n\geq3$, the existence of infinitely many principal joint $L^2$-eigenvalues has been shown by Lapid and Müller \cite{lapidmueller09}. More precisely, there is a Weyl law for the principal joint eigenvalues and the number of complementary eigenvalues are shown to be bounded by a function of lower order growth. 

In the recent article \cite{edwardsoh22} Edwards and Oh give examples and conditions on the discrete subgroup $\G$ which imply that the complementary eigenvalues are not only of lower quantity but that they are indeed absent. The main example are selfjoinings of convex-cocompact subgroups in $PSO(n,1)$, but they conjecture that this holds for every Anosov subgroup. 
%Our main result below can also be applied to these examples (see Section~\ref{sec:examples}) in order to obtain precise knowledge about the whole joint eigenspectrum. #Bem: Wir haben ja noch gar nicht erklärt was unser Resultat ist. da wirkte der Satz auf mich deplaziert.

In this article we are interested in conditions on the group $\Gamma$ which imply the absence of principal eigenvalues. In order to state our main theorem, recall the definition of a wandering point: 
If $\Gamma$ acts continuously on a topological space $T$, then a point $t\in T$ is called wandering, if there exists a neighborhood $U\subset T$  of $t$ such that $\{\gamma \in \Gamma: \gamma U\cap U\neq \emptyset\}$ is finite. The collection of all wandering points is called the wandering set $\w(\Gamma,T)$. 

We can now state our main theorem.
\begin{theorem}\label{thm:main_intro}
	Let $X=G/K$ be a Riemannian symmetric space of non-compact type and 
$\Gamma\subset G$ a discrete torsion-free subgroup.
 Let $\ov X$ be the geodesic   or the maximal Satake compactification (see Sections~\ref{sec:geo} and \ref{sec:satake}) and let $\w(\G,\ov X)$ be the wandering set for the action of $\G$ on $\ov X$.
 If $\w(\G,\ov X)\cap\partial \ov X\neq \emptyset$, then there are no principal joint $L^2$-eigenvalues on $\Gamma\backslash X$.
\end{theorem}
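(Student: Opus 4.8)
The plan is to exploit a wandering point $\xi\in \w(\G,\ov X)\cap \partial \ov X$ to produce a large open region in $\G\backslash X$ that is "invisible" to any principal $L^2$-eigenfunction. Concretely, suppose for contradiction that $\lambda\in i\mf a^*$ is a principal joint $L^2$-eigenvalue with eigenfunction $f\in L^2(\G\backslash X)$, $f\neq 0$. Lift $f$ to a $\G$-invariant joint eigenfunction $\widetilde f$ on $X$. Since $\xi$ is wandering, there is a neighborhood $U$ of $\xi$ in $\ov X$ with $\Gamma_U:=\{\g:\g U\cap U\neq\emptyset\}$ finite; after shrinking $U$ (using that $\Gamma$ is torsion-free, hence $\Gamma_U=\{e\}$ for $U$ small, since a nontrivial element fixing a neighborhood-germ near a boundary point forces, by discreteness, a finite-order phenomenon that cannot occur) we may assume $\g U\cap U=\emptyset$ for all $\g\neq e$. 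Thus $V:=U\cap X$ injects into $\G\backslash X$, and $\int_V |\widetilde f|^2\,d\mathrm{vol}\leq \|f\|_{L^2(\G\backslash X)}^2<\infty$. So it suffices to show: a nonzero joint eigenfunction for a character $\chi_\lambda$ with $\lambda\in i\mf a^*$ cannot be square-integrable on $V\cap X$ when $V$ is a neighborhood of a boundary point of the geodesic or maximal Satake compactification.

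The key analytic input is the asymptotic behavior of joint eigenfunctions on $X$ with purely imaginary spectral parameter. For $\lambda\in i\mf a^*$ the associated spherical function $\varphi_\lambda$ has the Harish-Chandra expansion, and more generally any joint eigenfunction for $\chi_\lambda$ admits an expansion in terms of $c$-functions and Harish-Chandra series along each boundary face; the leading radial decay is governed by $e^{-\rho(H)}$ times oscillatory and polynomial-in-$H$ factors, where $\rho$ is the half-sum of positive roots. The point is that $e^{-2\rho(H)}$ is precisely balanced against the volume growth $e^{2\rho(H)}$ of the symmetric space in the relevant directions, so that $|\widetilde f|^2\,d\mathrm{vol}$ fails to be integrable toward the boundary: on a neighborhood $V$ of $\xi\in\partial\ov X$, one gets (after diagonalizing via the expansion and using that the oscillatory terms do not conspire to vanish identically, because $f\not\equiv 0$ near $\xi$) a lower bound $\int_V|\widetilde f|^2\,d\mathrm{vol}=\infty$ unless $\widetilde f\equiv 0$ on a full neighborhood of $\xi$ in $X$. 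A unique continuation / real-analyticity argument for the joint eigenfunction equation (eigenfunctions of $\mathbb D(G/K)$ are real-analytic) then forces $\widetilde f\equiv 0$ on all of $X$, hence $f=0$, the desired contradiction. One must be slightly careful that the geodesic compactification and the maximal Satake compactification have different boundary structures: for the geodesic compactification a neighborhood of a boundary point $\xi$ contains a whole "cone" of geodesics toward a neighborhood of $\xi$ in $\partial_\infty X$, and along each such geodesic the decay rate is $e^{-\rho(H_\xi^+)(t)}$ where $H_\xi^+$ is the direction; for the Satake compactification the boundary is stratified by boundary symmetric spaces and one needs the refined asymptotics along the corresponding walls. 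In both cases the uniform statement needed is: the $L^2$-mass of a $\chi_\lambda$-eigenfunction, $\lambda\in i\mf a^*$, near any boundary point is infinite unless the function vanishes identically near that point.

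The main obstacle is this uniform non-integrability estimate near the boundary, i.e. controlling joint eigenfunctions (not just the spherical function, and not just in a single flat direction) well enough to rule out $L^2$-mass accumulating at $\xi$. For rank one this is Patterson's / Lax--Phillips' argument using the explicit spherical functions and the fact that the indicial equation has no $L^2$ solution at the conformal boundary; in higher rank the substitute is the theory of boundary value maps (Poisson transforms, the Oshima--Sekiguchi / Kashiwara et al.\ characterization of joint eigenfunctions via hyperfunction boundary values on the full flag variety) together with the sharp asymptotics of the $c$-function. I would isolate this as a preparatory proposition: \emph{for $\lambda\in i\mf a^*$, any joint eigenfunction $u$ for $\chi_\lambda$ on an open truncated cone $\mathcal C\subset X$ toward a boundary point $\xi$ satisfies $\int_{\mathcal C}|u|^2\,d\mathrm{vol}=\infty$ unless $u\equiv 0$}; the proof of the theorem is then the short argument above. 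The other, more bookkeeping-type, obstacle is to verify that \emph{some} neighborhood $U$ of $\xi$ actually meets $X$ in a set of the cone type needed — this is exactly where the geometry of the two specific compactifications enters, and where the hypothesis $\w(\G,\ov X)\cap\partial\ov X\neq\emptyset$ does its work by guaranteeing such a $U$ embeds into $\G\backslash X$.
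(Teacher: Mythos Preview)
Your overall strategy matches the paper's: use the wandering point to get $L^2$-control of the lift on a neighborhood of a boundary point, then invoke asymptotic expansions to show the leading boundary values vanish, and conclude $f=0$. However, there is a genuine gap: you never establish that the lift $\widetilde f$ has \emph{moderate growth}. The asymptotic expansion results you want to invoke (Harish-Chandra series, van den Ban--Schlichtkrull) are stated for joint eigenfunctions in $E_\lambda^*$ or $E_\lambda^\infty$, i.e.\ those satisfying a uniform bound $|\widetilde f(x)|\leq Ce^{rd(x,x_0)}$. A $\Gamma$-invariant lift of an $L^2$ function on $\Gamma\backslash X$ has no a priori reason to satisfy such a bound: being $L^2$ on a fundamental domain does not prevent $\widetilde f$ from growing faster than exponentially along sequences where the injectivity radius of $\Gamma\backslash X$ collapses. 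The paper devotes all of Section~\ref{sec:modgrowth} to this point, combining Sobolev embedding (using that $\widetilde f$ is a Laplace eigenfunction, hence $\|f\|_{L^\infty(B)}\leq C_\lambda\|f\|_{L^2(B)}$ on unit balls) with a Cheeger--Gromov--Taylor lower bound on the injectivity radius of $\Gamma\backslash X$ (decaying at worst exponentially in $d(x,x_0)$) to obtain the required exponential bound. Without this step the asymptotic expansion machinery is simply not available, and your argument does not get off the ground.

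A secondary remark: your passage from ``leading oscillatory terms do not vanish'' to ``$\widetilde f\equiv 0$ on a full neighborhood of $\xi$'' and then to ``$\widetilde f\equiv 0$ by real-analyticity'' elides a step. Square-integrability on the cone forces the \emph{leading} coefficients $p_{\lambda,w\lambda-\rho}(f,k)$ to vanish for $k$ in an open subset of $K$, but this does not make $\widetilde f$ vanish on any open subset of $X$ (the subleading terms in the expansion are still there), so naive analytic continuation does not apply. The paper instead extends the vanishing of $p_{\lambda,w\lambda-\rho}(f,\cdot)$ from $V\subset K$ to $VAN$ and then invokes \cite[Theorem~4.1]{vdBanSchl89LocalBD}, a local injectivity statement for the boundary-value map, to conclude $f=0$. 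Your mention of Poisson transforms and the Oshima--Sekiguchi/Kashiwara circle of ideas shows you sense this is the right neighborhood, but the precise tool is this local injectivity, not unique continuation for the eigenfunction itself.
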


Let us compare our theorem to the classical result of Patterson: 
First of all, for $\mathbb H$ the geodesic compactification and the Satake compactification coincide. Furthermore, if $\Gamma\subset SL(2,\R)$ is geometrically finite, then it is well known that the following are equivalent:
\begin{enumerate}
 \item $\mathrm{vol}(\Gamma\backslash \mathbb H) = \infty$
 \item the limit set of $\Gamma$ is not the whole boundary $\Lambda(\Gamma)\neq \partial \mathbb H$
 \item there is a non-empty open set of discontinuity $\Omega(\Gamma)\subset \partial \mathbb H$ on which $\Gamma$ acts properly discontinuously.
\end{enumerate}
The last point immediately implies the existence of a wandering point of the $\Gamma$ action on $\overline{ \mathbb H}$. In this sense our theorem boils down to the classical result of Patterson. Also the higher dimensional result of Lax-Phillips on $\mathbb H^n$ is easily recovered from our main theorem: If $\Gamma\subset PSO(1,n)$ is geometrically finite and $\Gamma\backslash \mathbb H^n$ of infinite volume, then at least one non-compact end has to be a funnel or a cusp of non-maximal rank, and the existence of such a non-compact end directly implies the wandering condition of Theorem~\ref{thm:main_intro}.

As discrete subgroups on higher rank semisimple Lie groups are known to be constrained by strong rigidity results, it is a valid question whether there are interesting examples in higher rank which fulfill the wandering condition of Theorem~\ref{thm:main_intro}. We address this question in Section~\ref{sec:examples} and we will see that   all images of Anosov representations fulfill our condition. This is a consequence of recent results on compactifications of Anosov symmetric spaces \cite{KL18,GGKW15} that are modeled on the Satake compactification. 

A further natural question is, whether one can also in the higher rank setting
obtain the result by the assumption of infinite volume of the locally symmetric spaces
instead of the dynamical assumption on the group action used in our theorem. We
do not know a definitive answer. However, it should be noted, that there is so
far no good notion of a geometrically finite group $\Gamma$ in higher rank.
Without the assumption of geometric finiteness, to our best knowledge even for
$SL(2,\mathbb R)$ it is unknown if infinite volume implies the absence of
principal eigenvalues. 
% By the analogy between a Jacobs ladder and a one dimensional chain of atoms
% as well as effects observed for Anderson localizations, we are tempted to
% conjecture that the answer is negative.

\emph{Outline of the proof and the article}.
Let $f\in C^ \infty(X)$ be the $\G$-invariant lift of a joint eigenfunction for $\mathbb D(X)$ that is in $L^ 2(\G\backslash X)$. 
The proof of Theorem~\ref{thm:main_intro} relies on the analysis of the asymptotic behavior of $f$ towards the boundary of the compactification at infinity. For the result on the geodesic compactification it suffices to study the asymptotics of $f$ into the regular directions. In order to obtain the result on the Satake compactification we are required to also analyze the behavior in singular directions along the different boundary strata of the Weyl chambers. 

In a first step we show that $f$ satisfies a certain growth condition called \emph{moderate growth}.
This is done by elliptic regularity combined with coarse estimates on the injectivity radius (see Section~\ref{sec:modgrowth}).

The knowledge of moderate growth then allows us (see Section~\ref{sec:absence}) to use asymptotic expansion results for $f$ by van den Ban-Schlichtkrull \cite{vdBanSchl87, vdBanSchl89LocalBD}. For the asymptotics into the regular directions, i.e. in the interior of the positive Weyl chamber $ \mf a^ +\subset \mf a$,  it follows from \cite{vdBanSchl87} that the leading term for the expansion of $f(k\exp(tH)K)$ with $k\in K$ and $H\in \mf a^ +$ is 
\[
\sum_{w\in W} p_w(k)e^ {(w\lambda-\rho)(tH)} \quad \text{as} \quad t\to \infty,
\]
where $W$ is the Weyl group, $\rho$ the usual half sum of roots and $\lambda\in i\mathfrak a^*$ a regular spectral parameter (for singular spectral parameters the formula becomes slightly more complicated but is still tractable). The wandering condition of $\G$ acting on the geodesic compactification  $X\cup X(\infty)$ yields a neighborhood $U$ in 
$X\cup X(\infty)$ of some point in $X(\infty)$ such that $f\in L^ 2(U\cap X)$.
Combining this with the expansion and the description of such neighborhoods $U$ implies that all  the boundary values $p_w$ vanish on an open subset of $K$. This implies, again by \cite{vdBanSchl87}, that $f=0$.

The result for the Satake compactification follows the same strategy but involves more complicated expansions from \cite{vdBanSchl89LocalBD} that describe the asymptotic behavior into the singular directions along the different boundary strata of the Weyl chamber (Section~\ref{sec:satabsence}). 

Finally, in Section~\ref{sec:examples} we provide some examples of higher rank locally symmetric spaces that fulfill the wandering condition of Theorem~\ref{thm:main_intro}. In particular, we show that all quotients by Anosov subgroups fulfill the assumption. 

% The difference between the geodesic and the maximal Satake compactification lies in the description of the neighborhoods so that we have to apply slightly different arguments.
% More precisely, for the geodesic compactification we only need the asymptotic expansion in the directions in the interior of the positive Weyl chamber $\mf a^+$. These are given in \cite{vdBanSchl87}.
% For the maximal Satake compactification we also need asymptotics along the walls of the positive Weyl chamber which are given in the later work \cite{vdBanSchl89LocalBD}.
% The proof for the geodesic compactification is given in Section~\ref{sec:geoabsence} and for the maximal Satake compactification in Section~\ref{sec:satabsence}.
% In Section~\ref{sec:conclusion} we prove Theorem~\ref{thm:main_intro}. After that we provide some examples including the case of Anosov subgroups (Section~\ref{sec:examples}).

\emph{Acknowledgement.}
We thank Valentin Blomer for his suggestion to study this question and for numerous stimulating discussions. We furthermore thank Samuel Edwards, Joachim Hilgert, Lizhen Ji, Fanny Kassel, Michael Magee, Werner Müller and Beatrice Pozzetti for discussions and advice to the literature.  This work has received funding from the Deutsche Forschungsgemeinschaft (DFG) Grant No. WE 6173/1-1 (Emmy Noether group “Microlocal Methods for Hyperbolic Dynamics”) as well as SFB-TRR 358/1 2023 — 491392403 (CRC ``Integral Structures in Geometry and Representation Theory'').

\section{Preliminaries}

\subsection{Symmetric spaces}\label{sec:notation}
In this section we fix the notation for the present article. Let $G$ be a real semisimple non-compact Lie group with finite center and  with Iwasawa decomposition $G=KAN$. Furthermore, let $M\coloneqq Z_K(A)$ be the centralizer of $A$ in $K$. 
%and $G=KAN_-$ the opposite Iwasawa decomposition. 
We denote by $\mf g, \mf a, \mf n, %\mf n_-,
\mf k,\mf m$ the corresponding Lie algebras. 
%For $g\in G$ let $H(g)$ be the logarithm of the $A$-component in the Iwasawa decomposition.   
We have a $K$-invariant inner product on $\mf g$ that is induced by the Killing form and the Cartan involution. 
We further have the orthogonal Bruhat decomposition $\mf g =\mf a \oplus \mf m \oplus \bigoplus _{\alpha\in\Sigma} \mf g_\alpha$ into root spaces $\mf g_\alpha$ with respect to the $\mf a$-action via the adjoint action $\ad$, i.e.
$\mf g_\alpha = \{Y\in \mf g\mid [H,Y]= \alpha(H) Y \;\forall H\in \mf a\}$.
Here $\Sigma=\{\alpha\in \mf a^ \ast\mid \mf g_\alpha\neq 0\}\subseteq \mf a^\ast$ is the set of restricted roots. 
Denote by $W$ the Weyl group of the root system of restricted roots. 
Let $n$ be the real rank of $G$ and $\Pi$ (resp. $\Sigma^+$) the simple (resp. positive) system in $\Sigma$ determined by the choice of the Iwasawa decomposition.  
Let $m_\alpha \coloneqq \dim_\R \mf g_\alpha$ and $\rho \coloneqq \frac 12 \Sigma_{\alpha\in \Sigma^+} m_\alpha \alpha$. 
%Denote by $w_0$ the longest Weyl group element, i.e. the unique element in $W$ mapping $\Pi$ to $-\Pi$. 
Let $\mf a_+ \coloneqq \{H\in \mf a\mid \alpha(H)>0 \,\forall \alpha\in\Pi\}$ denote the positive Weyl chamber. % and $\mf a^\ast_+$ the corresponding cone in $\mf a^\ast$ via the identification $\mf a \leftrightarrow \mf a^\ast$ through the Killing form $\langle\cdot,\cdot\rangle$ restricted to $\mf a$. 
%We denote by ${}_+\mf a^\ast$ the dual cone $\{\lambda \in \mf a ^\ast\mid \lambda (H)> 0 \,\forall H\in \ov{\mf a_+}\setminus\{0\}\}$ and by  $\ov{{}_+\mf a^\ast}$ its closure $\{\lambda \in \mf a ^\ast\mid \lambda (H)\geq 0 \,\forall H\in \mf a_+\}=\R_{\geq 0} \Pi$. 
%Hence, if $\omega_j$ is the dual basis of $\alpha_j$ then $\ov{{}_+\mf a^\ast}=\{\lambda\in \mf a^\ast\mid \langle \lambda,\omega_j\rangle\geq 0 \, \forall j=1,\ldots,n\}$. 
%Furthermore, we denote $\ov{ {}_-\mf a^\ast}\coloneqq -\ov{{}_+\mf a ^\ast}$. 
If $\ov {A^+} \coloneqq \exp (\ov {\mf a_+})$, then we have the Cartan decomposition $G=K\ov {A ^+}K$. 
The main object of our study is the symmetric space $X=G/K$ of non-compact type. On $X$ with a natural $G$-invariant measure $dx$ we have the integral formula 
\begin{align}\label{eq:intKAK}
 \int_{X} f(x)dx=\int_K\int_{\mf a_+} f(k\exp(H)) \prod_{\alpha\in\Sigma^+} \sinh(\alpha(H))^{m_\alpha} dHdk. 
\end{align}
(see \cite[Ch.~I Theorem~5.8]{gaga}).
 \begin{example}
  If $G=SL_n(\R)$, then we choose $K=SO(n)$, $A$ as the set of diagonal
matrices of positive entries with determinant 1, and $N$ as the set of upper
triangular matrices with 1's on the diagonal. $\mf a$ is the abelian Lie
algebra of diagonal matrices and the set of restricted roots is $\Sigma=
\{\varepsilon_i-\varepsilon_j\mid i\neq j\}$ where $\varepsilon_i(\lambda)$ is
the $i$-th diagonal entry of $\lambda$. The positive system corresponding to
the Iwasawa decomposition is  $\Sigma^+=\{\varepsilon_i-\varepsilon_j\mid
i<j\}$ with simple system $\Pi=\{\alpha_i = \varepsilon_i-\varepsilon_{i+1}\}$.
The positive Weyl chamber is $\mf
a_+=\{\operatorname{diag}(\lambda_1,\ldots,\lambda_n)\mid
\lambda_1>\cdots>\lambda_n\}$ and the Weyl group is the symmetric
group $S_n$ acting by permutation of the diagonal entries.  
\begin{figure}[ht]
  \centering
  \includegraphics[width=0.8\linewidth,trim = 5cm 15cm 9cm 10cm,clip]{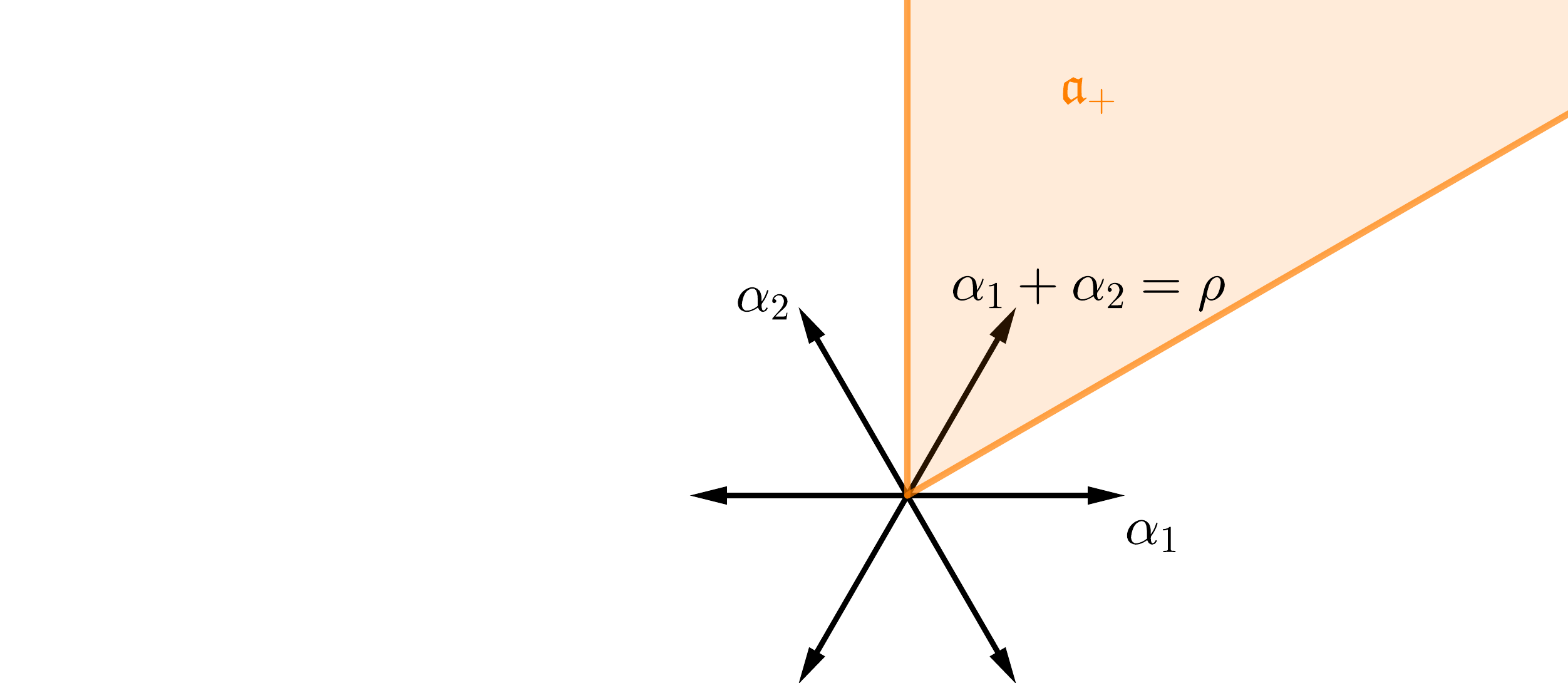}
  \caption{The root system for the special case $G=SL_3(\R)$: There are three positive roots $\Sigma^+=\{\alpha_1,\alpha_2,\alpha_1+\alpha_2\}$. As all root spaces are one dimensional the special element $\rho=\frac 12 \Sigma_{\alpha\in \Sigma^+} m_\alpha \alpha$ equals $\alpha_1+\alpha_2$.}
 \end{figure}
\end{example}

\subsection{Invariant differential operators}\label{sec:harishchandra}
Let $\mathbb D(G/K)$ be the algebra of \emph{$G$-invariant differential operators} on $G/K$, i.e. differential operators commuting with the left translation by elements  $g\in G$. Then we have an algebra isomorphism $\HC\colon \mathbb D(G/K)\to \text{Poly}(\mf a^\ast)^W$ from $\mathbb D(G/K)$ to the $W$-invariant complex polynomials on $\mf a^\ast$ which is called the \emph{Harish-Chandra homomorphism} (see \cite[Ch.~II~Theorem 5.18]{gaga}). For $\lambda\in \mf a^\ast_\C$ let $\chi_\lambda$ be the character of $\mathbb D(G/K)$ defined by $\chi_\lambda(D)\coloneqq \HC(D)(\lambda)$. Obviously, $\chi_\lambda= \chi_{w\lambda}$ for $w\in W$. Furthermore, the $\chi_\lambda$ exhaust all characters of $\mathbb D(G/K)$ (see \cite[Ch.~III Lemma~3.11]{gaga}). We define the space of joint eigenfunctions $$E_\lambda \coloneqq\{f\in C^\infty(G/K)\mid Df = \chi_\lambda (D) f \quad \forall D\in  \mathbb D(G/K)\}.$$ Note that $E_\lambda$ is $G$-invariant. 

\begin{example}
 	For $G=SL_n(\R)$ the algebra $\text{Poly}(\mf a_\C^\ast)^W$ is generated by $n-1$ elements 
	$p_2,\ldots,p_n$.
	Let us identify $\mf a_\C$ and $\mf a_\C^\ast$ via $\lambda\leftrightarrow \operatorname{Tr}(\lambda\; \cdot)$.
	Then $p_i(\lambda)= \lambda_1^i+\cdots+ \lambda_n^i=\operatorname{Tr}(\lambda^i)$ where $\lambda=\operatorname{diag}(\lambda_1,\ldots,\lambda_n)\in \mf a_\C$.
	Clearly, these polynomials are invariant under permutations of the diagonal entries and
	it can be shown that they are algebraically independent and generate $\text{Poly}(\mf a_\C^\ast)^W$ (see \cite{Hum92}).
	$\mathbb{D}(G/K)$ is then generated by the preimages of $p_i$ under $\HC$.
	Up to lower order terms the resulting invariant differential operators are given by the Maass-Selberg operators $\delta_i$ which are defined for $f\in
C^\infty(G/K)=C^\infty(SL_n(\R)/SO(n))$ 
by
\[
	\left.\delta_if(g K)=\operatorname{Tr}\left(\left(\frac{\partial}{\partial X}\right)^i\right) \right|_{X=0}f\left(g \exp\left(X-\frac{1}{n}\operatorname{Tr}(X)I_n \right)K\right),
\]
where 
\[
	X=\begin{pmatrix}
		x_{11}&\cdots&x_{1n}\\\vdots&\ddots&\vdots\\x_{1n}&\cdots&x_{nn}
	\end{pmatrix}
	\quad \text{and} \quad
\frac{\partial}{\partial X}= \begin{pmatrix}
	\frac{\partial}{\partial x_{11}}&\cdots&\frac{\partial}{2\partial x_{1n}}\\\vdots& \ddots &\vdots \\ \frac{\partial}{2\partial x_{1n}} &\cdots &\frac{\partial}{\partial x_{nn}}
\end{pmatrix}.
\]
(see \cite{brennecken2020algebraically}).
\end{example}
Now, let $\G\leq G$ be a torsion-free discrete subgroup. Since $D\in \mathbb D(G/K)$ is $G$-invariant, it descends to a differential operator ${}_\G D$ on the locally symmetric space $\G\backslash G/K$. Therefore, the left $\G$-invariant functions of $E_\lambda$ (denoted by ${}^\G E_\lambda$) can be identified with joint eigenfunctions on $\G\backslash G/K$ for each ${}_\G D$: $${}^\G E_\lambda = \{f\in C^\infty(\G\backslash G /K)\mid {}_\G D f = \chi_\lambda (D) f \quad \forall \, D\in \mathbb D(G/K)\}.$$

The goal is to show that $L^2(\G\backslash G/K)\cap {}^\G E_\lambda =\{0\}$ for $\lambda\in i\mf a^\ast$ and certain discrete subgroups $\G$. Then $$\sigma(\G\backslash X)\coloneqq\{\lambda\in\mf a^ \ast_\C\mid L^2(\G\backslash G/K)\cap {}^\G E_\lambda \neq\{0\}\}$$ has the property that the set of principal eigenvalues  $\sigma(\G\backslash X)\cap i\mf a^ \ast$ is empty.

\subsection{Geodesic compactification}\label{sec:geo}
In this section we recall the notion of the geodesic compactification of
a simply connected and non-positively curved Riemannian manifold $X$. A classical reference for this topic is \cite{Ebe96}. In the sequel also the Satake compactification will be crucial thus we provide detailed references to \cite{BJ} which treats both types of compactifications.
\begin{definition}[{\cite[Section I.2.2]{BJ}}]
 Two (unit speed) geodesics $\g_1,\g_2$ are equivalent if $\limsup_{t\to \infty} d(\g_1(t),\g_2(t))<\infty$. The space $X(\infty)$ is the factor space of all geodesics modulo this equivalence relation. The union $X\cup X(\infty)$ is called \emph{geodesic compactification}. The topology on $X\cup X(\infty)$ is given as follows: For $[\g]\in X(\infty)$ the intersection with $X$ of a fundamental system of neighborhoods is given by $C(\g, \varepsilon, R) = C(\g,\varepsilon)\smallsetminus B(R)$ where $$C(\g,\epsilon)=\{x\in X\mid\text{the angle between $\g$ and the geodesic from $x_0$ to $x$ is less than $\varepsilon$}\}$$ and $B(R)$ is the ball of radius $R$ centered at some base point $x_0\in X$. This topology is Hausdorff and compact.
\end{definition}

The space $X(\infty)$ can be canonically identified with the unit sphere in the tangent space at the base point $x_0 \in X$. If $\exp \colon T_{x_0} X\to X$ is the (Riemannian) exponential map at $x_0$, then a representative of the equivalence class of geodesics corresponding to a unit vector $Y\in T_{x_0} X$ is given by the geodesic $t\mapsto \exp(tY)$. %in particular, with the unit sphere in $\mf p =T_{x_0}X$ via $Y\mapsto \{t\mapsto \exp(tY)\}$ where $\mf p$ is the $-1$-eigenspace of the Cartan involution. 
This identification yields the neighborhoods $C(Y_0,\varepsilon,R) = \{\exp{tY}\mid t>R,\|Y\|=1,  |\cos\inv(\langle Y,Y_0\rangle)| < \varepsilon\}$ where $Y_0\in T_{x_0} X$ is normalized.
More precisely, if $\gamma$ is the geodesic $t\mapsto \exp(tY_0)$ then $C(\gamma,\varepsilon,R)=C(Y_0,\varepsilon,R)$.

%$\Omega_{\varepsilon,R}(Y_0)= \{\exp{tY}\mid t>R, \cos(\langle Y,Y_0\rangle) < \varepsilon\}$. 
                                                                                                
Let us return to the setting where $X=G/K$ is a symmetric space of non-compact type, then $X$ is simply connected and non-positively curved. Hence, the geodesic compactification of $X$ is defined and we have the following proposition.

\begin{proposition}[{\cite[Proposition I.2.5]{BJ}}]
 The action of $G$ on $X$ extends to a continuous action on $X\cup X(\infty)$.
\end{proposition}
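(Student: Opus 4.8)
The plan is to show that the given action of $G$ on $X$ by isometries extends to a well-defined action on the boundary $X(\infty)$ and that the resulting action on $X\cup X(\infty)$ is jointly continuous. First I would check that the extension to $X(\infty)$ is well-defined: since every $g\in G$ acts on $X$ as a Riemannian isometry, it maps unit-speed geodesics to unit-speed geodesics and preserves the distance function $d$; hence if $\limsup_{t\to\infty} d(\g_1(t),\g_2(t))<\infty$ then $\limsup_{t\to\infty} d(g\g_1(t),g\g_2(t))<\infty$ as well, so $g$ induces a bijection $X(\infty)\to X(\infty)$, $[\g]\mapsto[g\circ\g]$. The group axioms for the action on $X(\infty)$ are then immediate from those on $X$. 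So the content is entirely in the continuity statement.

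The key step is joint continuity of the map $G\times(X\cup X(\infty))\to X\cup X(\infty)$. Continuity at points of $G\times X$ is clear since $G$ acts smoothly (hence continuously) on $X$, and one checks that the inclusion $X\hookrightarrow X\cup X(\infty)$ is an open embedding, so no boundary behaviour intrudes there. The real work is continuity at a point $(g_0,[\g])$ with $[\g]\in X(\infty)$. Here I would use the cone neighbourhood basis $C(\g,\varepsilon,R)$ from the definition together with the fact that $g_0$ is an isometry: given a cone neighbourhood $C(g_0\g,\varepsilon,R)$ of $g_0[\g]$, I want to produce a neighbourhood $V\ni g_0$ in $G$ and a cone neighbourhood $C(\g,\varepsilon',R')$ of $[\g]$ with $V\cdot C(\g,\varepsilon',R')\subset C(g_0\g,\varepsilon,R)$. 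The point is that $g_0$ maps the cone $C(\g,\varepsilon',R')$ exactly to a cone around $g_0\g$ but based at the shifted point $g_0x_0$ rather than $x_0$; a comparison lemma for cones with different base points in a non-positively curved (CAT(0)) space — angles and Busemann-type estimates are uniformly controlled — shows that a cone based at $g_0 x_0$ of small aperture and large radius sits inside a cone based at $x_0$ of the prescribed aperture and radius. Then continuity of $g\mapsto gx$ at $g_0$ for $x$ in a fixed large ball, plus the Lipschitz/isometry control, lets one absorb the perturbation $g\in V$ into a slightly larger cone.

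The main obstacle I anticipate is precisely this change-of-basepoint estimate for cone neighbourhoods: one must show that the topology on $X(\infty)$ defined using cones based at $x_0$ is independent of the basepoint, with quantitative control so that nearby isometries $g\in V$ do not distort a thin cone by more than a controlled amount. In CAT(0) geometry this is standard — it follows from convexity of the distance function along geodesics, the fact that geodesics are determined by their endpoints, and comparison of comparison angles — but it is the one genuinely geometric input, as opposed to formal manipulation. Alternatively, one can sidestep explicit cone estimates by invoking the identification of $X(\infty)$ with the unit sphere $S(T_{x_0}X)$ via the exponential map and the fact (from comparison geometry) that $\overline X = X\cup X(\infty)$ is homeomorphic to a closed ball on which $G$ acts; continuity then reduces to continuity of the ``visual'' maps, but this in turn rests on the same non-positive-curvature estimates, so I would expect \cite{BJ} or \cite{Ebe96} to carry out exactly the cone comparison sketched above.
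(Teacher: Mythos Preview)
The paper does not give its own proof of this proposition: it is quoted verbatim as \cite[Proposition~I.2.5]{BJ} and used as a black box, so there is nothing in the present article to compare your argument against. Your sketch is the standard CAT(0) argument (well-definedness via isometry invariance of the asymptotic relation, then joint continuity via basepoint-independence of the cone topology), and it is correct in outline; this is essentially how the result is established in the cited references \cite{BJ,Ebe96}.
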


% For $I\subsetneq \Pi$ let $\mf a_I=\bigcap_{\alpha\in I} \ker \alpha$, $\mf a^I=\mf a_I^\perp$, $\mf n_I=\bigoplus_{\alpha\in \Sigma^+\setminus\langle I\rangle} \mf g_\alpha$ and $\mf m_I = \mf m\oplus \mf a^I \oplus\bigoplus _{\alpha\in\langle I\rangle} \mf g_\alpha$. Define the subgroups $A_I=\exp \mf a_I$, $N_I=\exp \mf n_I$ and $M_I=M\langle\exp\mf m_I\rangle$. Then $P_I=M_IA_IN_I$ is the standard parabolic subgroup for the subset $I$.
% Let $\mf a_I^+ = \{H\in \mf a_I\mid \alpha (H)>0\,\forall \alpha\in \Pi\setminus I\}$. Then $\mf p = \bigcup_{k\in K, I\subsetneq \Pi} \Ad(k) \mf a_I^+$.
% \begin{proposition}
%  Let $Y\in S(\mf p)\cap \Ad(k)\mf a_I^+$ for $k\in K$ and $I\subsetneq \Pi$ where $S(\mf p)$ is the unit sphere in $\mf p$. Then the stabilizer of $Y$ is $kP_I k\inv$.
% \end{proposition}

\subsection{Maximal Satake compactification}\label{sec:satake}
In this section we introduce a different compactification for a Riemannian symmetric space $X=G/K$ the so called maximal Satake compactification. Before entering the technicalities let us give some heuristics: Recall that the Cartan decomposition allows to write $G=K\overline{\exp \mf a_+}K$ and since $K$ is compact the ``way'' in which a point in $G/K$ tends to infinity can be described in $\mathfrak a_+$. Recall that the particular simplicity of a rank one locally symmetric space stems from the fact that $\mathfrak a_+$ is just a half line (geometrically it corresponds to the distance from the origin of the symmetric space) and there is only one ``way'' to tend towards infinity. In the higher rank case $\mf a_+$ is a higher dimensional simplicial cone bounded by the hyperplanes $\ker \alpha\subset \mf a$ for $\alpha \in \Pi$ and the Satake compactifications will ``detect'' if a sequence tends to infinity inside the cone, while staying at bounded distance to a certain number of chamber walls $\ker \alpha $ for some subset $\alpha\in I\subsetneq \Pi$.

In order to describe the precise structure of the Satake compactification we need to introduce the following notion of standard parabolic subgroups:

For $I\subsetneq \Pi$ let $\mf a_I\coloneqq\bigcap_{\alpha\in I} \ker \alpha$, $\mf a^I\coloneqq \mf a_I^\perp$, $\mf n_I\coloneqq\bigoplus_{\alpha\in \Sigma^+\smallsetminus\langle I\rangle} \mf g_\alpha$ and $\mf m_I \coloneqq \mf m\oplus \mf a^I \oplus\bigoplus _{\alpha\in\langle I\rangle} \mf g_\alpha$. Define the subgroups $A_I\coloneqq\exp \mf a_I$, $N_I\coloneqq\exp \mf n_I$ and $M_I\coloneqq M\langle\exp\mf m_I\rangle$. Then $P_I\coloneqq M_IA_IN_I$ is the standard parabolic subgroup for the subset $I$.
We furthermore introduce the notation $\mf a^I_{+} \coloneqq \{H\in \mf a^I\mid \alpha (H)>0\;\forall \alpha\in I\}$ and $\mf a_{I,+} \coloneqq \{H\in \mf a_I\mid \alpha(H)>0\;\forall \alpha \in \Pi\smallsetminus I\}$.

\begin{example}
	For $G=SL_n(\R)$ the set of simple roots is $\Pi=\{\alpha_i=\varepsilon_i-\varepsilon_{i+1} \mid 1\leq i\leq n-1\}$.
	Let $I=\{\alpha_{i_1},\ldots,\alpha_{i_k}\}$ be a proper subset of $\Pi$.
	Then $\mf a_I= \{\on{diag}(\lambda_1,\ldots,\lambda_n)\mid \lambda_{i_j}=\lambda_{i_j+1}\}$
	and $\mf a^I= \bigoplus_j  \{\on{diag}(0,\ldots,\lambda_{i_j},-\lambda_{i_j+1},\ldots,0)\}$.
	Note that $\mf a^I=\on{span}\alpha_{i_j}$ if one identifies $\mf a$ and $\mf a^\ast$ (see Figure~\ref{fig:subspacesofa} for an illustration).
	Hence, $\mf a^I$ consists of blocks where a single block is a copy of the $\mf a$-part of $SL_m(\R)$.
	Each block corresponds to a root in $\Pi\smallsetminus I$.
	More precisely, if $\alpha_i\in \Pi\smallsetminus I$ then a block ends in row $i$. 
	Note that the $m_i$ can very well be equal to 1. 
	In this case there is simply a zero at this point on the diagonal.
	\begin{figure}[ht]
		\centering
		\includegraphics[width=0.8\textwidth, trim=5cm 6cm 4cm 4cm, clip]{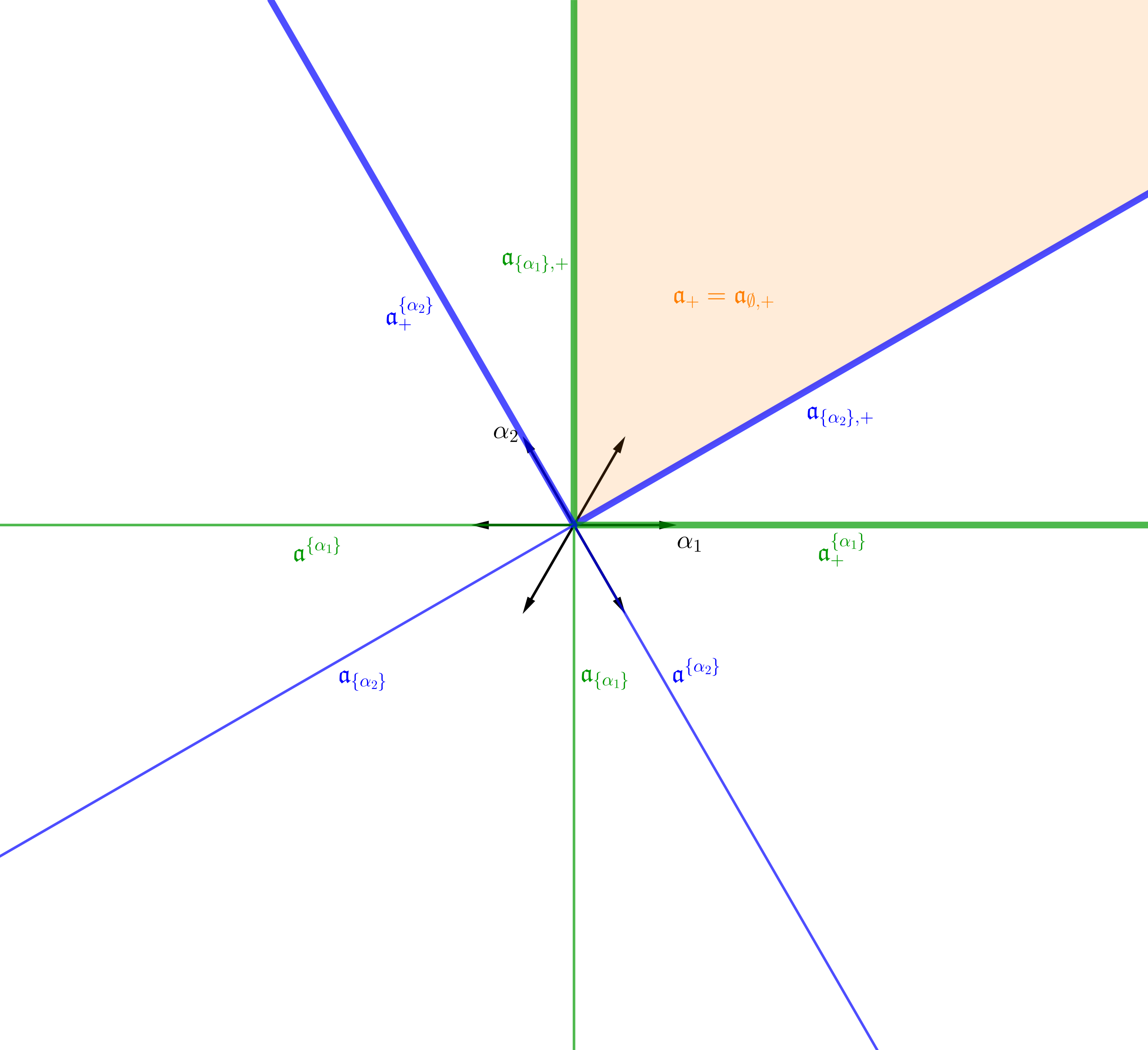}
		\caption{The various cones and subspaces in $\mf a$ corresponding to subsets of $\Pi$ for $G=SL_n(\R)$.
		$\mf a_\emptyset$ is all of $\mf a$ and $\mf a^\emptyset$ is the origin.}
		\label{fig:subspacesofa}
	\end{figure}
	$\mf m_I$ adds the corresponding root spaces, so $\mf m_I$ is isomorphic to direct sum of different $\mf {sl}_m(\R)$.
\[
	\mf m_I= \begin{pmatrix}
		\mf {sl}_{m_1}(\R)
	&&\\
	&
	\ddots
	&\\
	&&
	\mf {sl}_{m_{n-1-k}}(\R)
	\end{pmatrix}
\]	
where the bottom rows of the blocks correspond to the index of the roots in $\Pi\smallsetminus I$.
	$\mf n_I$ is the Lie algebra that contains of the upper-triangular matrices with non-zero entries in the positions that are not in the blocks of $\mf m_I$.
	On the group level $A_I= \{\on{diag}(\lambda_1,\ldots,\lambda_n)\in A\mid \lambda_{i_j}=\lambda_{i_j +1}\}$ and $N_I$ is the same as $\mf n_I$ but with $1$'s on the diagonal.
	For $M_I$ one has to multiply by $M=\{\on{diag}(\pm 1,\ldots,\pm 1)\}$ so that $M_I$ consists of block diagonal matrices where each block has determinant $\pm 1$ under the condition that the whole matrix has determinant $1$.
	It follows that the standard parabolic subgroups $P_I$ 
	are the sets of block upper-triangular matrices:
	\[
    \begin{tikzpicture}
\matrix (m1)    [matrix of nodes,
                 left delimiter={[}, right delimiter={]},
                 row sep=-0.5pt,column sep=-0.5pt,
                 every node/.style={inner sep=5pt}
                 ]
{
	|[draw]| $ \mbox{\Huge $\ast$}$
	 &            && $\mbox{\Huge $\ast$    }$          \\
	 & |[draw]|  $\ast$ &&               \\
             && $\ddots$ &               \\
	    \Huge 0 &&              & |[draw]| $\mbox{\Large $\ast$}$   \\
 };
    \end{tikzpicture}
		\]
		
\end{example}

The maximal Satake compactification $\ov X^{\max}$ is the $G$-compactification of $X$ (i.e. a compact Hausdorff space containing $X$ as an open dense subset such that the $G$-action extends continuously from $X$ to the compactification) with the orbit structure
$\ov X^{\max} = X \cup \bigcup_{I\subsetneq \Pi} \mc O_I$. 
For the orbit $\mc O_I$ we can choose a base point $x_I\in \mc O_I$ with $\operatorname{Stab}(x_I)=N_IA_I(M_I\cap K)$. 
The topology can be described as follows: Since $G = K\ov {A^+} K$ and $K$ is compact, it suffices to consider sequences $\exp H_n$, $H_n\in \ov {\mf a_+}$. 
Such a sequence by definition converges iff $\alpha(H_n)$ converges in $\R\cup\{\infty\}$ for all $\alpha\in \Pi$. 
If this is the case, to determine the limit, let $I=\{\alpha\in \Pi \mid \lim\alpha(H_n)<\infty \}$ and $H_\infty\in \ov{\mf a^I_+}$ such that $\alpha(H_\infty)=\lim\alpha(H_n)$ for $\alpha\in I$. Then $\exp H_n\to \exp (H_\infty)x_I$.

The intersection with $X$ of a fundamental system of neighborhoods of $k\exp(H_\infty)x_I$ with $k\in K, H_\infty\in {\mf a^I_+}$ is given by $$V\exp\{H\in \ov{\mf a_+}\mid |\alpha(H)-\alpha(H_\infty)|< \varepsilon,\alpha\in I, \alpha(H)>R,\alpha\not\in I\}x_0,$$ where $V$ is a fundamental system of neighborhoods of $k$ in $K$, $\varepsilon\searrow 0$, $R\nearrow \infty$.
If  $H_\infty\in \ov{\mf a^I_+}$, let $J=\{\alpha\in I\mid \alpha(H_\infty)=0\}$. Then the intersection with $X$ of a fundamental system of neighborhoods of $k\exp(H_\infty)x_I$ with $k\in K$ is given by $$V(K\cap M_J) \exp\{H\in \ov{\mf a_+}\mid |\alpha(H)-\alpha(H_\infty)|< \varepsilon, \alpha\in I, \alpha(H)>R,\alpha\not \in I\}x_0,$$ where $V,\varepsilon,R$ are as above.

\begin{figure}[ht]
	\centering
	\includegraphics[width=0.8\textwidth, trim=1cm 1cm 11cm 0cm, clip]{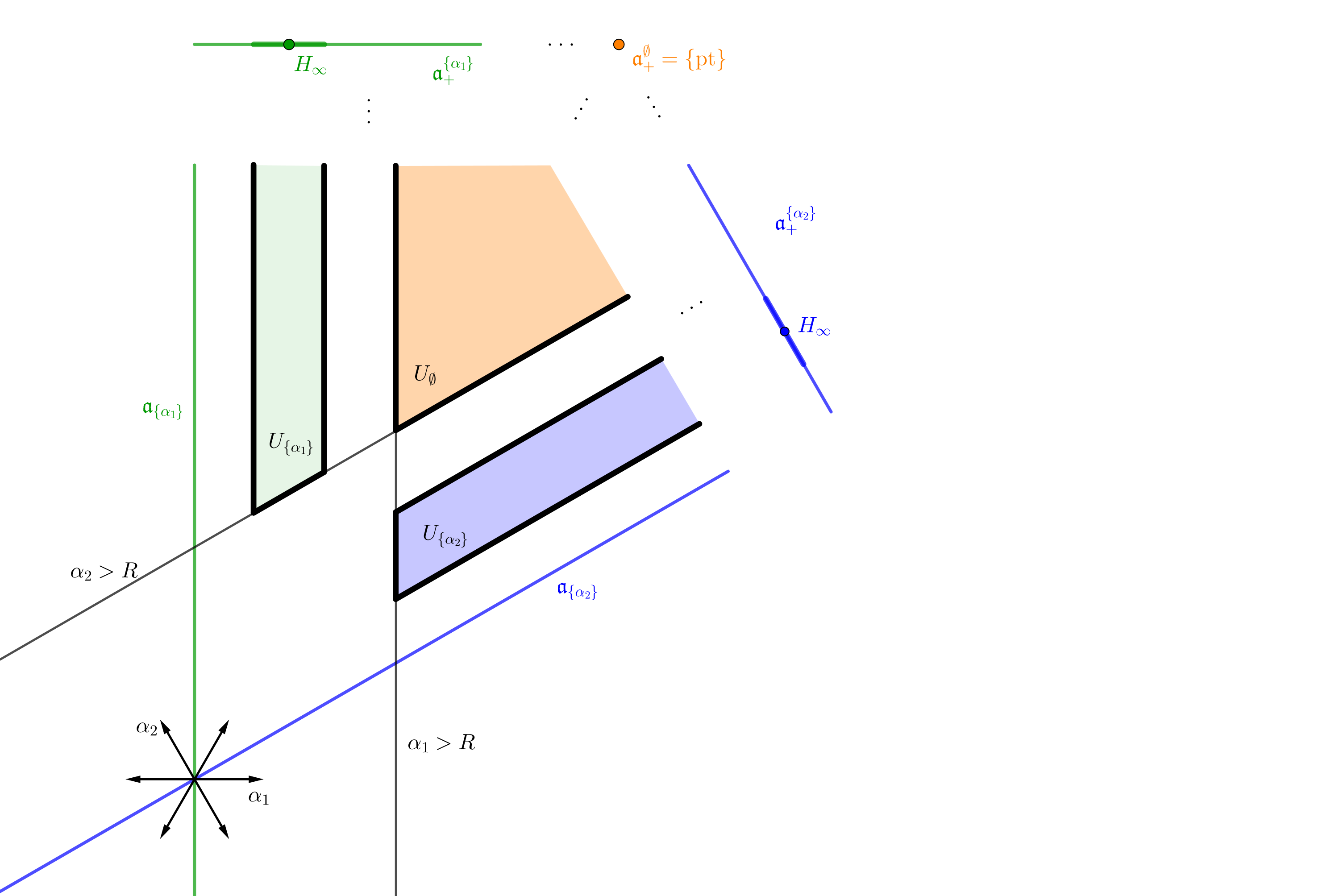}
	\caption{The compactification of $\mf a_+$ for $G=SL_3(\R)$ is obtained by gluing $\ov{\mf a_+^{\{\alpha_1\}}}, \ov{\mf a_+^{\{\alpha_2\}}}$ and $\ov{\mf a_+^\emptyset}$ to the boundary of $\ov{\mf a_+}$. 
	The sets $U_I$ for $I=\{\alpha_1\},\{\alpha_2\},\emptyset$ are the intersection of $\mf a_+$ with a fundamental neighborhood of $\exp(H_\infty)x_I$.}
	\label{fig:}
\end{figure}

Note that usually one defines the Satake compactification in a different way (see e.g. \cite[Ch.~I.4]{BJ}). Namely, let $\tau\colon G\to PSL(n,\C)$ be an irreducible faithful projective representation such that $\tau(K)\subseteq PSU(n)$. The closure in the projective space of Hermitian matrices of the image of the embedding of $X$ given by $gK\mapsto\R (\tau(g)\tau(g)^ \ast)$ is then called Satake compactification. It only depends on the highest weight $\chi_\tau$ of $\tau$. If $\chi_\tau$ is contained in the interior of the Weyl chamber, then this compactification is isomorphic to the maximal Satake compactification defined above. It is maximal in the sense that it dominates every other Satake compactification $\ov X^ S$ (i.e. there is a continuous $G$-equivariant map  $\ov X^{\max}\to \ov X^ S$). Since we only need the description of neighborhoods and the orbit structure we chose to introduce $\ov X^ {\max}$ this way.

\section{Moderate growth}\label{sec:modgrowth}
In this section we show that on a locally symmetric space each joint eigenfunction which is $L^2$ satisfies a growth condition in the following sense.
\begin{definition}
\begin{enumerate}[(i)]
 \item A function $f\colon X\to\C$ is called \emph{function of moderate growth} if there exist $r\in \R, C>0$ such that $$|f(x)|\leq C e^{rd(x,x_0)}$$ for all $x\in X$.
 \item For $\lambda\in \mf a_\C^ \ast$ the space $E_\lambda^\ast$ is the space of joint eigenfunction with moderate growth, i.e. 
 $$E_\lambda^\ast = \{f\in E_\lambda\mid f \text{ has moderate growth}\}.$$
\end{enumerate}

\end{definition}

Let $\G\leq G$ be a torsion-free discrete subgroup.%  and $d=\dim X$.

\begin{theorem}\label{thm:modgrowth}
 Let $f\in {}^\G E_\lambda\cap L^2(\G\backslash G/K)$. Then $f$ (considered as a $\Gamma$-invariant function on $X$) has moderate growth.
\end{theorem}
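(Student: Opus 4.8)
The strategy is a standard elliptic-regularity bootstrap combined with a covering argument on the locally symmetric space, using the fact that the eigenfunction equation is elliptic once we include the Laplacian $\Delta\in\mathbb D(G/K)$. First I would fix the $\Gamma$-invariant lift $f\in C^\infty(X)$ and work on balls $B(x,1)\subset X$. Since $f\in {}^\G E_\lambda$, in particular $\Delta f = \chi_\lambda(\Delta) f$, so $f$ is a joint eigenfunction of an elliptic operator; local elliptic estimates give $\|f\|_{C^0(B(x,1/2))}\leq C_x \|f\|_{L^2(B(x,1))}$, where the constant $C_x$ depends only on the geometry of the ball $B(x,1)$ and on $\chi_\lambda(\Delta)$. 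Because $X=G/K$ is homogeneous, $G$ acts transitively by isometries, and $\Delta$ is $G$-invariant, the constant $C_x$ can be taken \emph{independent of $x$}: one transports the estimate around by $G$. Hence there is a uniform $C>0$ with
\[
 |f(x)| \leq C\,\|f\|_{L^2(B(x,1))}\qquad\text{for all }x\in X.
\]

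The second step is to control $\|f\|_{L^2(B(x,1))}$ by the global $L^2$-norm on $\G\backslash X$ together with a volume/injectivity-radius factor that grows at most exponentially in $d(x,x_0)$. Let $\pi\colon X\to\G\backslash X$ be the projection. The point is that $\pi$ restricted to $B(x,1)$ is at most $N(x)$-to-one, where $N(x)=\#\{\gamma\in\G : \gamma B(x,1)\cap B(x,1)\neq\emptyset\}$; consequently $\|f\|_{L^2(B(x,1))}^2 \leq N(x)\,\|f\|_{L^2(\G\backslash X)}^2$. It therefore suffices to show $N(x)$ grows at most exponentially in $d(x,x_0)$. This follows from a volume-packing bound: the balls $\{\gamma B(x,1)\}_{\gamma\in S}$, with $S=\{\gamma : \gamma B(x,1)\cap B(x,1)\neq\emptyset\}$, are contained in $B(x,3)$ and have a definite volume, but since $\G$ is discrete and torsion-free they need not be disjoint — so instead I count using discreteness of $\G$ in $G$: there is a neighborhood $\cU$ of $e$ in $G$ with $\gamma\cU\cap\cU=\emptyset$ for $\gamma\in\G\setminus\{e\}$, and the $\gamma$'s in $S$ lie in a set of the form $B_G(e,\text{const})\cdot(\text{something of size }d(x,x_0))$ in $G$; using the integral formula \eqref{eq:intKAK}, the Haar volume of the relevant region of $G$ grows at most like $e^{2\rho(\log\text{(radius)})}$, i.e. exponentially in $d(x,x_0)$, which bounds $N(x)$ since each $\gamma$ occupies a fixed Haar-volume chunk. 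Combining the two displayed inequalities yields $|f(x)|\leq C' e^{r d(x,x_0)}$ for suitable $C',r$, which is moderate growth.

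The step I expect to be the main obstacle is making the exponential bound on the "coarse injectivity radius" $N(x)$ precise and uniform: one must carefully relate the distance $d(x,x_0)$ in $X$ to the size of a region in $G$ containing all the relevant group elements, and then estimate the Haar volume of that region via the Cartan integration formula, tracking that the $\prod_\alpha\sinh(\alpha(H))^{m_\alpha}$ weight contributes only an $e^{2\rho(H)}$-type exponential. The homogeneity argument that removes $x$-dependence from the elliptic constant is routine but should be stated cleanly, since it is what lets the pointwise bound hold everywhere rather than on a fixed compact set. Once these two ingredients are in place, the conclusion is immediate, so no further delicate estimates are needed.
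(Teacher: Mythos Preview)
Your approach is correct and shares its first step with the paper: both obtain the uniform estimate $|f(x)|\leq C\,\|f\|_{L^2(B(x,1))}$ from Sobolev embedding for $\Delta$ combined with the $G$-homogeneity of $X$. The second step is organized differently. The paper controls $\|f\|_{L^2(B(x,1))}$ by first proving (via a Cheeger--Gromov--Taylor inequality, Proposition~\ref{prop:injectivity}) that the injectivity radius of $\Gamma\backslash X$ at $\Gamma x$ is at least $\varepsilon(x)\asymp e^{-sd(x,x_0)}$, then covering $B(x,1)$ by $\lesssim\varepsilon(x)^{-m}$ balls of radius $\varepsilon(x)$ (Lemma~\ref{la:boxcounting}), on each of which the projection to $\Gamma\backslash X$ is injective, and summing. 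Your unfolding identity $\|f\|_{L^2(B(x,1))}^2\leq N(x)\,\|f\|_{L^2(\Gamma\backslash X)}^2$ bypasses both the injectivity-radius estimate and the covering lemma, reducing everything to an exponential bound on $N(x)$; this is a more elementary route to the same conclusion. One point does need tightening: your description of the subset of $G$ containing the relevant $\gamma$'s is imprecise, and packing in $G$ via a neighborhood $\cU$ of $e$ is more awkward than necessary. The clean argument stays in $X$: if $\gamma B(x,1)\cap B(x,1)\neq\emptyset$ then $d(\gamma x,x)\leq 2$, hence $d(\gamma x_0,x_0)\leq 2d(x,x_0)+2$ by the triangle inequality; since $\Gamma$ is discrete and torsion-free the orbit $\Gamma x_0$ is $\delta$-separated for some fixed $\delta>0$, and a volume comparison inside $B(x_0,2d(x,x_0)+3)$ using \eqref{eq:intKAK} then gives $N(x)\lesssim e^{4\|\rho\|\,d(x,x_0)}$, which is all you need.
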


The proof uses Sobolev embedding and the following estimate on the injectivity radius.

\begin{proposition}[{see \cite[Thm. 4.3]{CGT}}]
 Let $(M,g)$ be a complete Riemannian manifold such that the sectional curvature $K_M$ satisfies $K_M\leq K$ for constant $K\in\R$.  Let $0< r <\pi/4\sqrt K$ if $K >0$ and $r\in (0,\infty)$ if $K\leq 0$.  Then  the  injectivity  radius $\mathrm{inj}(p)$ at $p$ satisfies $$\mathrm{inj}(p)\geq \frac{r\mathrm{Vol} (B_M(p,r))}{\mathrm{Vol} (B_M(p,r)) + \mathrm{Vol}_{T_pM}(B_{T_pM}(0,2r))},$$ where $\mathrm{Vol}_{T_pM}(B_{T_pM}(0,2r))$ denotes  the  volume  of  the  ball  of radius $2r$ in $T_pM$,  where  both  the volume  and  the  distance  function  are  defined  using  the  metric $g^\ast:= exp^\ast_p g$, i.e.  the  pull-back  of the metric $g$ to $T_pM$ via the exponential map.
\end{proposition}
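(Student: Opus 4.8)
The statement is the Cheeger--Gromov--Taylor injectivity radius estimate, and the proof I would give combines three classical facts: a Rauch/Jacobi-field comparison argument (turning the upper curvature bound into a lower bound on the conjugate radius), Klingenberg's lemma (producing a short geodesic loop through $p$ when $\mathrm{inj}(p)$ is small), and the area formula for the exponential map (converting a covering-multiplicity statement into a volume inequality).

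\emph{Setup and the trivial case.} Write $V=\mathrm{Vol}(B_M(p,r))$ and $W=\mathrm{Vol}_{g^*}(B_{T_pM}(0,2r))$; both are positive, so the right-hand side of the asserted inequality is $<r$ and there is nothing to prove unless $\mathrm{inj}(p)<r$, which I assume henceforth. By comparison with the model space of curvature $K$, the hypothesis $K_M\le K$ forces that no geodesic of $M$ of length $<\pi/\sqrt K$ (no geodesic at all, if $K\le 0$) carries a pair of conjugate points; since $2r<\pi/(2\sqrt K)<\pi/\sqrt K$, the differential $d_v\exp_p$ is invertible for every $v$ in the Euclidean ball $B^{\mathrm e}:=\{v\in T_pM:|v|_{\mathrm{eucl}}<2r\}$. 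Hence $g^*=\exp_p^*g$ is a genuine Riemannian metric on $B^{\mathrm e}$, the map $\exp_p\colon (B^{\mathrm e},g^*)\to (M,g)$ is a local isometry, and the area formula gives
\[
\mathrm{Vol}_{g^*}(B^{\mathrm e})\;=\;\int_M \#\bigl(\exp_p^{-1}(y)\cap B^{\mathrm e}\bigr)\,d\mathrm{vol}_g(y).
\]
Since radial segments from the origin are unit-speed $g^*$-geodesics (Gauss lemma), $B^{\mathrm e}\subseteq B_{T_pM}(0,2r)$, so the left-hand side is $\le W$; moreover $w\in B^{\mathrm e}$ lies in $\exp_p^{-1}(y)$ exactly when the straight segment from $0$ to $w$ develops to a geodesic of $M$ from $p$ to $y$ of length $|w|_{\mathrm{eucl}}<2r$, so $\#(\exp_p^{-1}(y)\cap B^{\mathrm e})$ counts geodesics from $p$ to $y$ of length $<2r$.

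\emph{The short loop and the multiplicity count.} Since $\mathrm{inj}(p)<r<\pi/\sqrt K$ is strictly below the conjugate radius at $p$, Klingenberg's lemma yields a geodesic loop $\sigma$ based at $p$ of length exactly $2\,\mathrm{inj}(p)$. The heart of the argument is to show that, for every $y\in B_M(p,r)$, this loop forces at least $\lfloor r/\mathrm{inj}(p)\rfloor$ distinct geodesics from $p$ to $y$ of length $<2r$ (equivalently, $\exp_p|_{B^{\mathrm e}}$ covers $B_M(p,r)$ with multiplicity $\ge\lfloor r/\mathrm{inj}(p)\rfloor$): developed into $(B^{\mathrm e},g^*)$, the iterates $\sigma,\sigma*\sigma,\dots$ of the loop yield several lifts of $p$, and lifting a fixed minimizing geodesic $c$ from $p$ to $y$ from each of these lifts gives distinct preimages of $y$; the $k$-fold iterate contributes length $\le 2k\,\mathrm{inj}(p)+d(p,y)<2k\,\mathrm{inj}(p)+r$, which is still $<2r$ whenever $k\,\mathrm{inj}(p)\le r/2$, so a triangle-inequality bound keeps the corresponding lifts inside the ball of radius $2r$. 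Plugging this into the area formula gives $W\ge\lfloor r/\mathrm{inj}(p)\rfloor\,V\ge (r/\mathrm{inj}(p)-1)V$, i.e. $\mathrm{inj}(p)\,W\ge (r-\mathrm{inj}(p))V$, i.e. $\mathrm{inj}(p)(V+W)\ge rV$ — precisely the claim, the rounding being exactly what turns the naive denominator $W$ into $V+W$.

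\emph{The main obstacle.} The delicate step is this multiplicity count: one must verify that the geodesics obtained by winding along $\sigma$ and straightening are genuinely pairwise distinct and of length $<2r$, uniformly over \emph{all} of $B_M(p,r)$ and not only for $y$ close to $p$. This is where the curvature-controlled local-isometry property of $\exp_p$ on $B^{\mathrm e}$ and the factor $2$ between the radius $2r$ of the domain ball and the radius $r$ in which $y$ ranges must be used together; one must also be careful that $\sigma$ need not be a closed geodesic and that $\exp_p$ may distort distances by a bounded factor relative to the Euclidean metric when $K>0$.
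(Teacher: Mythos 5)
The paper does not prove this proposition; it is imported verbatim as a citation of \cite[Thm.~4.3]{CGT}, so there is no internal argument to compare against. Your proposal is therefore judged as a free-standing reconstruction of the Cheeger--Gromov--Taylor estimate. The skeleton you describe --- Rauch comparison to push the conjugate radius past $2r$, Klingenberg's lemma to produce a geodesic loop $\sigma$ of length $2\,\mathrm{inj}(p)$, the area formula for $\exp_p$ viewed as a local isometry $(B^{\mathrm e},g^*)\to(M,g)$, and a multiplicity bound obtained by winding along $\sigma$ --- is indeed the right set of ingredients, and the reduction ``multiplicity $\ge \lfloor r/\mathrm{inj}(p)\rfloor$ $\Rightarrow$ the stated inequality'' is arithmetically correct. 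Your observation that $d_{g^*}(0,v)=|v|_{\mathrm{eucl}}$ on $B^{\mathrm e}$ (so that lifts of paths of length $<2r$ cannot escape the domain) is also correct and is the reason the lifting works.

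There are, however, two genuine gaps, one quantitative and one structural. First, the winding count as written does not produce $\lfloor r/\mathrm{inj}(p)\rfloor$ preimages. With $l=\mathrm{inj}(p)$, the constraint you derive is $2kl+d(p,y)<2r$, which for $y$ ranging over all of $B_M(p,r)$ forces $kl\le r/2$; forward iterates $\sigma,\sigma^2,\dots$ alone therefore yield only about $\lfloor r/(2l)\rfloor+1$ preimages, which is strictly less than $\lfloor r/l\rfloor$ in general (already at $r/l=3$). To reach the claimed count one must wind in both directions, i.e.\ use $\sigma^k$ for $k\in\mathbb Z$ with $|k|\le r/(2l)$, which gives $2\lfloor r/(2l)\rfloor+1\ge\lfloor r/l\rfloor$; your write-up only mentions positive iterates. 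Second --- and this is the step you yourself flag as ``the main obstacle'' --- the pairwise distinctness of the lifted preimages $v_k$ is asserted but not argued. This is precisely where the theorem has content: one must show that the endpoints $q_k$ of the lifted iterates of $\sigma$ are all distinct in $B^{\mathrm e}$, and the mechanism for this is that $2l$ is the \emph{minimal} length of a geodesic loop at $p$ together with the minimizing-halves structure that Klingenberg's lemma provides (this is also the place where the stronger hypothesis $r<\pi/(4\sqrt K)$, rather than just $r<\pi/(2\sqrt K)$, is actually used --- your proof never exploits the extra factor of two). Once distinctness of the $q_k$ is established, distinctness of the $v_k$ does follow cheaply by uniqueness of lifts of $c^{-1}$ under the local diffeomorphism $\exp_p|_{B^{\mathrm e}}$, but as it stands the proposal identifies the crucial lemma without proving it, so it is an outline rather than a proof.
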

For $M=\G\backslash G/K$ we obtain that the injectivity radius decreases at most exponentially.
\begin{proposition}\label{prop:injectivity}
 There are constants $C,s>0$ such that 
 $$\mathrm{inj}_{\G\backslash G /K}(\G x)\geq C\inv e^{-sd(x,eK)}$$ for every $x\in G/K$.
\end{proposition}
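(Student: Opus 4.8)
The plan is to combine the Cheeger–Gromov–Taylor estimate above with the Bishop–Gromov volume comparison, anchoring everything at the base point $o\coloneqq eK$. Write $M=\G\backslash X$ and $N\coloneqq\dim X$. Since $X$ is of non-compact type, $M$ has non-positive sectional curvature, so the cited proposition applies with, say, radius $r=1$ and gives $\mathrm{inj}_M(\G x)\geq \mathrm{Vol}(B_M(\G x,1))/\big(\mathrm{Vol}(B_M(\G x,1))+\mathrm{Vol}_{T_{\G x}M}(B(0,2))\big)$. I would first note that the two terms here which are not the "local volume'' $v(\G x)\coloneqq\mathrm{Vol}(B_M(\G x,1))$ are bounded by constants independent of $x$: lifting $\G x$ to $\tilde x\in X$ along the covering $\pi\colon X\to M$, the Cartan–Hadamard property of $X$ makes $\exp^X_{\tilde x}$ an isometry from $(T_{\tilde x}X,(\exp^X_{\tilde x})^\ast g)$ onto $X$, and this identifies $(T_{\G x}M,(\exp^M_{\G x})^\ast g)$ isometrically with $X$, whence $\mathrm{Vol}_{T_{\G x}M}(B(0,2))=\mathrm{Vol}_X(B_X(o,2))$, which does not depend on $x$ by homogeneity of $X$; likewise $\pi(B_X(\tilde x,1))\supseteq B_M(\G x,1)$ and $\pi$ is a local isometry, so $v(\G x)\leq \mathrm{Vol}_X(B_X(o,1))$. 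Consequently $\mathrm{inj}_M(\G x)\geq c_1\,v(\G x)$ for a constant $c_1>0$, and it remains to bound $v(\G x)$ below by a multiple of $e^{-s\,d(x,o)}$.

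To do so I would anchor at $o$. Since $\G$ is discrete and torsion-free, the orbit $\G o\subseteq X$ is discrete with $o$ isolated in it, so $\varepsilon_0\coloneqq\tfrac12\inf_{\g\in\G,\ \g\neq e}d(o,\g o)>0$; the standard loop-lifting argument (a failure of injectivity of $\exp^M_{\G o}$ on a ball of radius $\rho$ produces $\g\neq e$ with $d(o,\g o)<2\rho$) yields $\mathrm{inj}_M(\G o)\geq\varepsilon_0$, hence $v(\G o)\geq\mathrm{Vol}_X(B_X(o,\min(1,\varepsilon_0)))>0$. Next, $M$ is locally symmetric, so its Ricci curvature is bounded below, say $\mathrm{Ric}_M\geq -(N-1)\kappa^2 g$ for some $\kappa\geq0$; Bishop–Gromov then gives, for every $q\in M$ and $R\geq1$, $\mathrm{Vol}(B_M(q,R))\leq C_0 e^{s_0 R}\,v(q)$ with $C_0,s_0>0$ depending only on $N$ and $\kappa$. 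Applying this at $q=\G x$ with $R=1+d(\G x,\G o)$ and using $B_M(\G o,1)\subseteq B_M(\G x,1+d(\G x,\G o))$ gives $v(\G o)\leq C_0 e^{s_0(1+d(\G x,\G o))}v(\G x)$, whence $v(\G x)\geq v(\G o)\,C_0^{-1}e^{-s_0}\,e^{-s_0 d(\G x,\G o)}\geq v(\G o)\,C_0^{-1}e^{-s_0}\,e^{-s_0 d(x,o)}$ since $d(\G x,\G o)\leq d(x,o)$. Combined with the first paragraph, this proves the proposition with $s=s_0$ and a suitable constant $C$.

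The one step that genuinely requires the comparison with a fixed reference point is the last one, and this is where I expect the only real subtlety to lie. A purely intrinsic attempt collapses: bounding $\mathrm{inj}_M(\G x)$ below by $v(\G x)$ (first paragraph) and $v(\G x)$ below by $\mathrm{const}\cdot\mathrm{inj}_M(\G x)^{N}$ (Günther's inequality applied to the embedded ball of radius $\mathrm{inj}_M(\G x)$ inside $B_M(\G x,1)$) only reproduces an \emph{upper} bound on $\mathrm{inj}_M(\G x)$ and is vacuous — consistent with the injectivity radius genuinely decaying, as it does in a cusp. The exponential loss is produced precisely by transporting the positive lower bound for $v$ at $o$ out to $\G x$ through Bishop–Gromov, which is available because the locally symmetric metric has Ricci bounded below. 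The only other point deserving care is the identification of the tangent-space volume in the Cheeger–Gromov–Taylor bound with the constant $\mathrm{Vol}_X(B_X(o,2))$ via the homogeneous Cartan–Hadamard structure of $X$; everything else is routine.
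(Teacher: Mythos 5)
Your proof is correct, but it takes a genuinely different route than the paper's. You fix the radius $r=1$ in the Cheeger--Gromov--Taylor estimate, reduce to the single inequality $\mathrm{inj}_M(\G x)\geq c_1\,\mathrm{Vol}(B_M(\G x,1))$ (after observing that the other two quantities in the CGT formula are bounded by constants), and then obtain the exponential factor by a Bishop--Gromov comparison in the quotient $M$, which transports the positive volume of $B_M(\G o,1)$ out to $\G x$. The paper instead lets the radius grow: taking $r=1+d(x,x_0)$ forces $B_M(\G x,r)\supseteq B_M(\G x_0,1)$, so the relevant volume in $M$ is bounded below by a fixed constant, and the exponential decay in the CGT bound comes entirely from the growth $\mathrm{Vol}_{G/K}(B_{G/K}(x_0,2r))\leq Ce^{sr}$ of balls in the universal cover. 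Both arguments rely on the same identification of $(T_{\G x}M,(\exp^M_{\G x})^\ast g)$ with $X=G/K$ via Cartan--Hadamard and homogeneity; the difference is that you additionally invoke Bishop--Gromov (hence use that $\mathrm{Ric}_M$ is bounded below), whereas the paper needs only the volume growth of $X$ itself, available directly from the integral formula \eqref{eq:intKAK}. Each is perfectly legitimate; the paper's choice keeps the proof self-contained within what is already set up. One small simplification to your write-up: the positivity $v(\G o)>0$ is immediate because $B_M(\G o,1)$ is a non-empty open subset of a Riemannian manifold, so the loop-lifting/injectivity-radius detour at $o$ is unnecessary.
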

\begin{proof}
 Since $\G\backslash G/K$ is of non-positive curvature we can apply the above proposition for every $r> 0$. Note that $\exp\colon T_pM\to M$ is the universal cover of $M$ and therefore $\mathrm{Vol}_{T_{\G x}M}(B_{T_{\G x}M}(0,2r)) = \mathrm{Vol}_{G/K}(B_{G/K}(x,2r)) = \mathrm{Vol}_{G/K}(B_{G/K}(x_0,2r)) \leq C e^{sr}$ for some constants $C,s$ independent of $x$, where $x_0$ is the base point $eK$ of $G/K$.
 
 Hence, 
 $$\mathrm{inj}(\G x)\geq r (1+\mathrm{Vol}_{T_{\G x}M}(B_{T_{\G x}M}(0,2r))/\mathrm{Vol} (B_M(\G x,r)))\inv\geq r(1+ Ce^{sr}/\mathrm{Vol} (B_M(\G x,r)))\inv$$
 
 For $r=1+d(x,x_0)$ we have $B_M(\G x,r)\supseteq B_M(\G x_0,1)$ and therefore 
 $$\mathrm{inj}(\G x)\geq (1+d(x,x_0))  (1+ Ce^{s(1+d(x,x_0))}/\mathrm{Vol} (B_M(\G x_0,1))\inv \geq (1+C'e^{sd(x,x_0)})\inv.$$
 This finishes the proof.
\end{proof}

Note that this estimate isn't sharp.
Indeed, the growth rate $s$ that we obtain in the proof is independent of $\Gamma$ and only depends on the volume growth in $G/K$.

Let $m=\dim X$. We need the following well-known lemma on the geodesic balls in $G/K$.

\begin{lemma}\label{la:boxcounting}
 Fix $r>0$. There is a constant $C$ such that for every $x\in G/K$ and $\varepsilon > 0$ there is a finite set $A\subseteq B(x,r)$ such that $\bigcup_{a\in A} B(a,\varepsilon) \supseteq B(x,r)$ and $\#A\leq C \varepsilon^{-m}$.
\end{lemma}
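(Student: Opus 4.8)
The plan is a standard volume packing (``box counting'') argument relying only on two geometric inputs: the homogeneity of $X=G/K$ under $G$, and its non-positive curvature. Since $G$ acts transitively by isometries, the Riemannian volume $v(\rho):=\mathrm{Vol}\, B(y,\rho)$ of a geodesic ball of radius $\rho$ is independent of the center $y$; and since $X$ is a Hadamard manifold, the Günther (Bishop--Günther) volume comparison yields the scale-independent lower bound $v(\rho)\ge \omega_m\rho^m$ for all $\rho>0$, where $\omega_m$ is the volume of the Euclidean unit $m$-ball. The non-positive curvature also guarantees that $v(2r)<\infty$ is a finite constant depending only on $r$ and $X$.

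It suffices to treat $0<\varepsilon\le r$; for $\varepsilon>r$ one may take $A=\{x\}$, which is harmless in the applications (where $\varepsilon$ is a small scale, comparable to an injectivity radius). So assume $0<\varepsilon\le r$. First I would choose $A\subseteq B(x,r)$ to be a \emph{maximal} subset that is $\varepsilon$-separated, i.e. $d(a,a')\ge\varepsilon$ for all distinct $a,a'\in A$. Such an $A$ exists and is finite, because any $\varepsilon$-separated subset of the bounded (hence precompact, by completeness) set $B(x,r)$ is finite. Maximality immediately gives the covering property: for $y\in B(x,r)$, either $y\in A$, or else $A\cup\{y\}$ fails to be $\varepsilon$-separated and hence $d(a,y)<\varepsilon$ for some $a\in A$; in both cases $y\in\bigcup_{a\in A}B(a,\varepsilon)$, so $\bigcup_{a\in A}B(a,\varepsilon)\supseteq B(x,r)$.

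Finally I would bound $\#A$ by volume. The $\varepsilon$-separation forces the balls $B(a,\varepsilon/2)$, $a\in A$, to be pairwise disjoint by the triangle inequality, and each lies in $B(x,r+\varepsilon/2)\subseteq B(x,2r)$ since $a\in B(x,r)$ and $\varepsilon\le r$. Summing volumes and invoking the two bounds for $v$,
\[
\#A\cdot\omega_m(\varepsilon/2)^m\ \le\ \sum_{a\in A}v(\varepsilon/2)\ =\ \mathrm{Vol}\!\Big(\bigsqcup_{a\in A}B(a,\varepsilon/2)\Big)\ \le\ \mathrm{Vol}\, B(x,2r)\ =\ v(2r),
\]
so $\#A\le \big(2^m v(2r)/\omega_m\big)\,\varepsilon^{-m}$, and $C:=2^m v(2r)/\omega_m$ works.

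I do not expect a genuine obstacle here: the only slightly delicate point is the global, scale-uniform validity of the Euclidean lower volume bound, which is exactly where non-positive curvature enters; everything else is elementary. If one prefers to avoid Günther's inequality, the bound $v(\rho)\ge\tfrac12\omega_m\rho^m$ for $\rho$ below a fixed $\rho_0$ follows from the local volume asymptotics $v(\rho)\sim\omega_m\rho^m$ together with homogeneity, and the remaining range $2\rho_0\le\varepsilon\le r$ is absorbed into $C$ via a single finite covering of $B(x,r)$ by $\rho_0$-balls whose cardinality is bounded independently of $x$.
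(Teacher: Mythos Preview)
Your argument is correct and is essentially the same packing argument as the paper's: both construct a maximal $\varepsilon$-separated set in $B(x,r)$ (the paper via a greedy algorithm, you via Zorn/maximality), observe that the $\varepsilon/2$-balls are disjoint and contained in a fixed larger ball, and compare volumes using homogeneity. The only cosmetic difference is that you invoke the G\"unther comparison to make the lower bound $v(\rho)\ge\omega_m\rho^m$ explicit, whereas the paper simply appeals to the local asymptotic $v(\varepsilon)\sim\omega_m\varepsilon^m$; both are fine, and your remark about the range $\varepsilon>r$ being harmless matches the paper's implicit ``as $\varepsilon\to0$'' caveat.
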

\begin{proof}
 Let $a_1=x$ and choose inductively $a_{i+1} \in B(x,r)\smallsetminus \bigcup_{j=0}^i B(a_j,\varepsilon)$ if the latter is non-empty. This yields a finite set $A=\{a_1,\ldots,a_N\}$   (since $\overline {B(x,r)}$ is compact) such that $B(x,r+\varepsilon)\supseteq \bigcup_{j=0}^N B(a_j,\varepsilon)\supseteq B(x,r)$ and $B(a_j,\varepsilon/2)$ are pairwise disjoint. It follows that 
 $\mathrm{Vol}(B(x,r+\varepsilon))\geq \sum_i \mathrm{Vol}(B(a_i,\varepsilon/2) = \# A \cdot \mathrm{Vol}(B(x,\varepsilon/2))$ and therefore  $\# A\leq \frac C {\mathrm{Vol}(B(x,\varepsilon/2))}$. The lemma follows from the fact that the volume is independent from the center and decreases like $\varepsilon^m$ as $\varepsilon \to 0$. 
\end{proof}

We can now combine Proposition~\ref{prop:injectivity} with Sobolev embedding to prove Theorem~\ref{thm:modgrowth}.
\begin{proof}[Proof of Theorem~\ref{thm:modgrowth}]
 Since $B(x_0,1)$ is relatively compact, there exists a constant $C$ such that $$\sup_{x\in B(x_0,1)} |f(x)| \leq C \|(\Delta +1)^{m/4+\varepsilon}f\|_{L^2(B(x_0,1))} = C(\chi_\lambda(\Delta)+1)^{m/4+\varepsilon} \|f\|_{L^2(B(x_0,1))}$$ by ellipticity of the Laplace operator $\Delta$ on $G/K$ and the Sobolev embedding $H^{m/2+\varepsilon}(B(x_0,1))\hookrightarrow C(B(x_0,1))$. By $G$-invariance of $\Delta$ and $d$ the same holds true for $x_0$ replaced by an arbitrary point $x\in X$. In particular, 
 $$|f(x)|\leq C(\lambda) \|f\|_{L^2(B(x,1))}.$$
 By Proposition~\ref{prop:injectivity} there are constants $C,s>0$ independent of $x$ such that $\mathrm{inj}_{\G\backslash G/K}(\G y)\geq C\inv e^{-sd(x,eK)}$ for every $y\in B(x,1)$. Let $\varepsilon(x) \coloneqq \frac 1 C e^{-sd(x,eK)}$. Then there is a finite set $A(x)\subseteq B(x,1)$ such that $\bigcup_{a\in A(x)} B(a,\varepsilon(x))$ covers $ B(x,1)$ and $\#A(x)\leq C' \varepsilon(x)^{-m}$ by Lemma~\ref{la:boxcounting}. 
 Hence, 
 $$\|f\|_{L^2(B(x,1))}^2\leq \sum_{a\in A(x)} \|f \|_{L^2(B(a,\varepsilon(x)))}^2$$
 Since $\mathrm{inj}_{\G\backslash G/K}(\G a)\geq \varepsilon(x)$ we have $\|f \|_{L^2(B(a,\varepsilon(x)))}\leq \|f\|_{L^2(\G\backslash G/K)}$ for $a\in A(x)$.
 Therefore,
 \begin{align*}
   |f(x)|&\leq C(\lambda)  \|f\|_{L^2(\G\backslash G/K)}  \sqrt{\#A(x)}\\
   &\leq C(\lambda)  \|f\|_{L^2(\G\backslash G/K)} C'^{1/2} \varepsilon(x)^{-m/2}\\
   &= C(\lambda)  \|f\|_{L^2(\G\backslash G/K)}  C'^{1/2}C^{m/2} e^{msd(x,x_0)/2}. \qedhere
 \end{align*}
\end{proof}

\begin{remark}
 In the case of locally symmetric spaces of finite volume there is a different argument showing Theorem~\ref{thm:modgrowth}: If we lift $f$ to a function on $G$ which we also call $f$, then there is smooth compactly supported function $\alpha$ on $G$ such that $f=f\ast \alpha$ (see~\cite[Theorem~1]{HC66}). Then one easily shows that $|f(\G x)|\leq C\|f\|_{L^1(\G\backslash G/K)} e^{sd(x,x_0)}$ using simple estimates for lattice point counting. Since $L^2\subseteq L^1$ for spaces of finite volume, we can deduce moderate growth for $f$. Unfortunately, this argument does not work for infinite volume locally symmetric spaces since a pointwise bound including the $L^2$-norm of $f$ would need much better counting estimates.
\end{remark}

\section{Absence of imaginary values in the $L^2$-spectrum}\label{sec:absence}
We introduce the space of smooth vectors in $E_\lambda^\ast$. It is precisely the space of joint eigenfunctions with smooth boundary values (see \cite{vdBanSchl87}).
\begin{definition}
 $$E_\lambda^\infty = \{f\in E_\lambda\mid \exists\, r  \;\forall u\in\mc U(\mf g)\;\exists\, C_u>0\colon |(uf)(x)|\leq C_u e^{r d(x,x_0)}\}$$
\end{definition}

\subsection{Geodesic compactification}\label{sec:geoabsence}
In this section we want to prove the following theorem.
\begin{theorem}\label{thm:smoothsquarevanish}
	Let $f\in E_\lambda^\ast$, $\lambda\in i\mf a^\ast$, such that $f$ is square-integrable on $C(Y_0, \varepsilon,R)$ for some $\varepsilon, R, Y_0$ (see Section~\ref{sec:geo}). 
	Then $f=0$.
\end{theorem}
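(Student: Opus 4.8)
The plan is to combine the asymptotic expansion of joint eigenfunctions of moderate growth along regular directions (due to van den Ban--Schlichtkrull \cite{vdBanSchl87}) with the square-integrability hypothesis on the cone $C(Y_0,\varepsilon,R)$ to force all the leading coefficients to vanish, and then invoke the injectivity of boundary-value maps to conclude $f=0$. First I would observe that $f\in E_\lambda^\ast$ actually lies in $E_\lambda^\infty$: the joint eigenfunction equation plus elliptic regularity (the Casimir/Laplacian being elliptic) upgrades the moderate growth bound on $f$ to moderate growth bounds on all derivatives $uf$, $u\in\mc U(\mf g)$, with a uniform exponent $r$ --- this is the standard bootstrapping argument and is implicit in the smooth-vector formalism of \cite{vdBanSchl87}. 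Having $f\in E_\lambda^\infty$, the results of \cite{vdBanSchl87} give, for regular $\lambda\in i\mf a^\ast$, an asymptotic expansion of $f(k\exp(tH)K)$ as $t\to\infty$, $H\in\mf a_+$ fixed, whose leading behaviour is
\[
	f(k\exp(tH)K)\sim \sum_{w\in W} p_w(k)\, e^{(w\lambda-\rho)(tH)},
\]
with smooth coefficient functions $p_w\in C^\infty(K)$ (for singular $\lambda$ the expansion has the same shape but with polynomial-in-$t$ corrections; the argument below is unaffected since those corrections do not help integrability).

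Next I would translate the cone $C(Y_0,\varepsilon,R)$ into $KAK$-coordinates. Since $X(\infty)$ is identified with the unit sphere in $T_{x_0}X$ and $K$ acts transitively on spheres around directions, a direction $Y_0$ is $K$-conjugate to some $H_0\in\ov{\mf a_+}$ of norm one; replacing $Y_0$ by a nearby regular direction (which only shrinks the cone) we may assume $H_0\in\mf a_+$. Then $C(Y_0,\varepsilon,R)\cap X$ contains a set of the form $\{k\exp(tH)K : k\in\Omega,\ H\in\mf a_+,\ \|H\|=1,\ t>R'\}$ for a suitable nonempty open $\Omega\subseteq K$ and possibly enlarged $R'$, i.e. an open truncated subcone of regular directions. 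Using the integration formula \eqref{eq:intKAK}, on this region $dx$ is comparable to $e^{2\rho(tH)}\,dk\,dH\,dt$ up to bounded factors. Plugging in the leading term of the expansion, $\|f\|_{L^2}^2$ on this region controls, for each fixed regular $H$ and $t$ large,
\[
	\int_\Omega \Bigl|\sum_{w\in W} p_w(k)\,e^{(w\lambda-\rho)(tH)}\Bigr|^2 e^{2\rho(tH)}\,dk
	= \int_\Omega \Bigl|\sum_{w\in W} p_w(k)\,e^{i\,(w\operatorname{Im}\lambda)(tH)}\Bigr|^2 dk,
\]
since $\lambda\in i\mf a^\ast$ makes $|e^{(w\lambda-\rho)(tH)}|^2 e^{2\rho(tH)}=1$. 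The exponentials $e^{i(w\lambda)(tH)}$ for distinct $w\lambda$ (distinct because $\lambda$ is regular) are linearly independent characters in $t$, so by a standard almost-periodicity / averaging argument the finiteness of $\int^\infty \!\!\bigl(\cdots\bigr)dt$ on no set of positive measure — wait, more carefully: $L^2$-integrability in $t$ on $(R',\infty)$ of a nonzero almost periodic function fails, hence for a.e. $k\in\Omega$ we must have $\sum_w p_w(k)e^{i(w\lambda)(tH)}$ equal to its $t$-mean, which forces each $p_w(k)=0$. Running this over a basis of regular $H$'s and using continuity of the $p_w$ gives $p_w\equiv 0$ on the open set $\Omega$ for all $w$.

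Finally, the coefficients $p_w$ are the boundary values of $f$ in the sense of \cite{vdBanSchl87}, and the map sending $f\in E_\lambda^\infty$ to its boundary values is injective (indeed $f$ is recovered from a single $p_w$, or from the full collection, via a Poisson-type integral representation, and a real-analytic boundary value vanishing on a nonempty open subset of $K$ vanishes identically by analyticity of the relevant distributions / by the $G$-equivariance which spreads $\Omega$ to all of $K$). Hence all boundary values vanish and therefore $f=0$. \textbf{The main obstacle} I anticipate is the bookkeeping in the second step: carefully checking that the full asymptotic expansion (not just its leading term, including the exponentially smaller terms and, for singular $\lambda$, the $t$-polynomial factors) cannot conspire to restore $L^2$-integrability, and making the "almost periodic functions are not $L^2(R',\infty)$" argument rigorous uniformly in $k$ so that one really concludes $p_w=0$ on an open set rather than merely at a single direction. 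The regular case is clean; the singular-$\lambda$ case needs the refined expansion of \cite{vdBanSchl87} and a slightly more careful linear-independence argument, but the exponent count $|e^{(w\lambda-\rho)(tH)}|^2e^{2\rho(tH)}=1$ is exactly critical, which is why square-integrability is so restrictive.
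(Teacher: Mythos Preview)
Your overall strategy---asymptotic expansion, the $\rho$-cancellation making the leading term borderline non-$L^2$, then injectivity of boundary values---matches the paper. But there are two genuine gaps.

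\textbf{The reduction $E_\lambda^\ast\Rightarrow E_\lambda^\infty$ is false.} You claim elliptic regularity upgrades moderate growth of $f$ to moderate growth of all $uf$, $u\in\mc U(\mf g)$, with a \emph{uniform} exponent. It does not. Elliptic regularity on $G/K$ controls derivatives in the local metric frame, but the Killing vector fields generating the left $G$-action have norm $\sim e^{Cd(x,x_0)}$ at $x$; each application of $Y\in\mf g$ therefore increases the growth exponent by a fixed amount, and you never obtain a single $r$ valid for all $u$. Concretely, in rank one $E_\lambda^\ast$ consists of Poisson transforms of distributions on $K/M$ while $E_\lambda^\infty$ corresponds to smooth boundary data, and these are genuinely different (e.g.\ the Poisson transform of a Dirac mass lies in $E_\lambda^\ast\setminus E_\lambda^\infty$). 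The paper handles this by convolving $f$ with an approximate identity $\varphi_n$ supported near $e$: then $\varphi_n\ast f\in E_\lambda^\infty$, and a short Cauchy--Schwarz argument together with the continuity of the $G$-action on the geodesic compactification shows $\varphi_n\ast f$ is still $L^2$ on a slightly smaller cone $C(Y_0,\varepsilon',R')$. Applying the $E_\lambda^\infty$ case to each $\varphi_n\ast f$ gives $\varphi_n\ast f=0$, hence $f=0$.

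\textbf{The final uniqueness step needs more than analyticity.} For $f\in E_\lambda^\infty$ the boundary values $p_w$ are smooth on $K$, not real-analytic, so vanishing on an open $\Omega\subset K$ does not propagate by analytic continuation; and ``$G$-equivariance spreads $\Omega$ to all of $K$'' is too vague to be correct as stated. The paper instead shows directly, using two lemmas of van den Ban--Schlichtkrull on how $p_{\lambda,w\lambda-\rho}(f,\,\cdot\,)$ transforms under right multiplication by $A$ and $N$, that vanishing on an open $V\subset K$ implies vanishing on the open set $VAN\subset G$, and then invokes the local boundary-data uniqueness theorem \cite[Theorem~4.1]{vdBanSchl89LocalBD} to conclude $f=0$. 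Your Poisson-representation idea would recover $f$ from the $p_w$ globally on $K$, but you only know them on an open subset, so this route needs exactly the local uniqueness result the paper cites.

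The middle portion of your argument (passing to a regular subcone in $KAK$-coordinates, the $e^{2\rho}$ Jacobian, and the conclusion that a nonzero finite exponential-polynomial sum with purely imaginary exponents cannot be $L^2$ on a half-line) is correct; the paper packages the last point as \cite[Lemma~8.50]{Kna86}, which also absorbs the polynomial-in-$t$ factors in the singular case that you were worried about.
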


Let $X(\lambda)\coloneqq \{w\lambda-\rho-\mu\mid w\in W, \mu \in \N_0\Pi\}$ (see Figure~\ref{fig:X_lambda} for a visualization in example of $SL(3,\R)$).
 We will use the following asymptotic expansion for functions in $E_\lambda^\infty$.

\begin{theorem}[{\cite[Thm~3.5]{vdBanSchl87}}]\label{thm:expansion}
 For each $f\in E_\lambda^\infty$, $g\in G$, and $\xi \in X(\lambda)$ there is a unique polynomial $p_{\lambda,\xi}(f,x)$ on $\mf a$ which is smooth in $x$ such that 
 $$f(g\exp(tH))\sim\sum_{\xi\in X(\lambda)} p_{\lambda,\xi}(f,g,tH) e^{t\xi(H)},\quad  t\to \infty,$$ at every $H_0\in \mf a_+$, i.e. for every $N$ there exist a neighborhood $U$ of $H_0$ in $\mf a_+$, a neighborhood $V$ of $x$ in $G$, $\varepsilon>0$, $C>0$ such that 
 $$\left|f(y\exp(tH)) - \sum_{\Re \xi(H_0)\geq -N} p_{\lambda,\xi}(f,y,tH) e^{t\xi(H)}\right| \leq C e^{(-N-\varepsilon)t}$$ for all $y\in V, H\in U$, $t\geq 0$.
\end{theorem}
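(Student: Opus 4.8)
\textbf{Plan of proof of Theorem~\ref{thm:expansion}.}
The statement we must prove is an asymptotic expansion theorem: for $f\in E_\lambda^\infty$ the function $t\mapsto f(g\exp(tH))$ admits, at each regular $H_0\in\mf a_+$, a converging-in-the-asymptotic-sense expansion into exponential-polynomial terms $p_{\lambda,\xi}(f,g,tH)e^{t\xi(H)}$ indexed by the set $X(\lambda)=\{w\lambda-\rho-\mu\mid w\in W,\mu\in\N_0\Pi\}$. The proof is a reduction to the theory of systems of ordinary differential equations with asymptotically constant (in fact exponentially-decaying-perturbed-constant) coefficients and regular singular behavior, which is exactly the framework developed by van den Ban--Schlichtkrull. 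The first step is to exploit the radial reduction: fix $H\in\mf a_+$ and study the one-parameter family $a_t=\exp(tH)$. The eigenfunction equation $Df=\chi_\lambda(D)f$ for all $D\in\mathbb D(G/K)$ translates, after using the Iwasawa decomposition $G=KAN$ and writing $f$ in terms of its restriction to a suitable finite-dimensional space of $K$-types or (equivalently) passing to the associated $\mathfrak n$-homology / asymptotic module, into a holonomic system: the derivatives $\partial_H^k f(ga_t)$ for $k$ up to some order $d$ are controlled by a first-order linear ODE system $\tfrac{d}{dt}\mathbf v(t)=\big(\mathbf A_\infty+\mathbf B(t)\big)\mathbf v(t)$, where $\mathbf v(t)$ is a vector built from $f$ and finitely many of its derivatives, $\mathbf A_\infty$ is a constant matrix whose eigenvalues are precisely the values $\xi(H)$ for $\xi\in X(\lambda)$ near the leading exponents, and $\mathbf B(t)$ decays exponentially as $t\to\infty$. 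The moderate (indeed: $\mc U(\mf g)$-uniform moderate) growth hypothesis built into the definition of $E_\lambda^\infty$ is what guarantees that $\mathbf v(t)$ does not blow up faster than exponentially and that the relevant matrix coefficients lie in the right symbol class.

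The second step is to identify the exponents. Here one uses that $\mathbb D(G/K)\cong\text{Poly}(\mf a^\ast)^W$ via $\HC$, so the eigenvalue system, after taking $\mf n$-leading terms (i.e. passing to the indicial equation at the cusp corresponding to $\mf a_+$), forces the admissible leading exponents to satisfy the Weyl-group symmetrized version of the indicial equation, whose solutions are exactly $\{w\lambda-\rho\mid w\in W\}$; the shifts by $-\mu$, $\mu\in\N_0\Pi$, come from the lower-order terms in the $\mf n$-expansion of the $\mathbb D(G/K)$-action (the $e^{-\alpha(H)}$-series for $\alpha\in\Sigma^+$, whose exponents live in $\N_0\Pi$). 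This is why $X(\lambda)$ has exactly the stated form. One then invokes the standard existence-and-uniqueness theorem for asymptotic expansions of solutions of such ODE systems (Levelt/van den Ban-type): if the perturbation $\mathbf B(t)$ is $O(e^{-\varepsilon t})$ and the unperturbed system $\tfrac{d}{dt}\mathbf v=\mathbf A_\infty\mathbf v$ has exponents forming a set closed under the relevant monoid of shifts, then every solution of moderate growth has a unique asymptotic expansion $\mathbf v(t)\sim\sum_\xi \mathbf p_\xi(t)e^{t\xi(H)}$ with $\mathbf p_\xi$ polynomial (polynomial degree bounded by the size of the Jordan blocks / coincidences among exponents), and the expansion may be differentiated term by term; the error after truncating at real part $\geq-N$ is $O(e^{(-N-\varepsilon)t})$. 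Extracting the top entry of $\mathbf v$ gives the expansion for $f(g\exp(tH))$ itself and shows $p_{\lambda,\xi}(f,g,\cdot)$ is a polynomial on $\mf a$ (not just in the chosen $H$), by running the argument for $H$ varying in a cone and using real-analyticity in the $A$-direction.

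The third step is the dependence on the remaining variables: smoothness (indeed analyticity) of $x\mapsto p_{\lambda,\xi}(f,x)$ and the local uniformity of the estimate in $(y,H)$ near $(x,H_0)$. This follows because the whole construction is natural: replacing $f$ by $R_u f$ for $u\in\mc U(\mf g)$ (right differentiation) stays inside $E_\lambda^\infty$ with a controlled growth exponent, so the coefficients of the ODE system and the resulting $\mathbf p_\xi(t)$ depend smoothly on the extra parameters; uniformity in $H$ in a neighborhood of $H_0$ is automatic as long as one stays in $\mf a_+$, since then the eigenvalues $\xi(H)$ of $\mathbf A_\infty$ keep their relative ordering of real parts and no new resonances appear, so the implicit constants can be chosen locally uniform. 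Uniqueness of the $p_{\lambda,\xi}$ is immediate: if $\sum_\xi p_\xi(tH)e^{t\xi(H)}=o(e^{-Nt})$ for all $N$ with exponents in a discrete set, then all $p_\xi$ vanish (compare growth rates along a ray).

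\textbf{Main obstacle.} The genuinely delicate point is the passage from the PDE system $\{Df=\chi_\lambda(D)f\}$ on $G/K$ to a \emph{finite-rank} ODE system in the $A$-direction with exponentially small perturbation: one must show that although $f$ a priori involves infinitely many $K$-types, the joint eigenfunction equation together with moderate growth forces the ``asymptotic data'' along $\mf a_+$ to be governed by a finite-dimensional module (the leading $\mf n$-homology of the relevant Harish-Chandra-type module), and that the tail is genuinely $O(e^{-\varepsilon t})$ rather than merely $o(1)$. Controlling this tail — i.e. proving the \emph{quantitative} remainder bound $Ce^{(-N-\varepsilon)t}$ with the $\varepsilon$-gap, uniformly in the side parameters — is the technical heart, and is exactly what the machinery of \cite{vdBanSchl87} supplies; I would cite that structure (Harish-Chandra's expansions, the converging $\Phi$-series / $c$-function combinatorics, and the ODE-with-parameters lemmas) rather than reprove it from scratch, and spend the write-up making sure the hypotheses of those lemmas (the exponential gap, the discreteness and shift-closedness of $X(\lambda)$, the moderate-growth normalization) are verified in the present generality.
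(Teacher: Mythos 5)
This statement is not proved in the paper at all: it is imported verbatim (with a small supplementary remark about uniformity in $x$) as Theorem 3.5 of van den Ban--Schlichtkrull \cite{vdBanSchl87}, and the only thing the paper adds is the observation that the locally uniform dependence on $g$ follows from (6.18) of that reference. So there is no ``paper's own proof'' to compare against; the relevant comparison is with the argument in \cite{vdBanSchl87} itself. Judged against that, your outline captures the correct overall shape of the argument: reduction of the joint eigenfunction system to a first-order linear ODE system along the $\mf a_+$-directions with an asymptotically constant leading matrix plus an exponentially decaying perturbation, identification of the exponents $X(\lambda)=\{w\lambda-\rho-\mu\}$ from the indicial data coming from the Harish-Chandra homomorphism and the $e^{-\alpha}$-expansion of the radial parts, and then invocation of a general asymptotic-expansion theorem for such perturbed constant-coefficient systems under a moderate growth hypothesis.

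Two cautions. First, the framing via ``$\mf n$-homology'' and ``finite-dimensional asymptotic modules'' is closer in spirit to the Hecht--Schmid / Casselman picture than to what van den Ban--Schlichtkrull actually do; their paper works directly with radial components of $\mathbb D(G/K)$ and a self-contained theory of systems of differential equations with exponentially small perturbations (their Sections 1--2), without invoking $\mf n$-homology. The conclusion is the same, but if one wanted to fill in details one should follow the radial-component route rather than the homological one, since that is what makes the finite-rank reduction and the quantitative $O(e^{(-N-\varepsilon)t})$ remainder come out cleanly. Second, as you say yourself, what you have written is a plan rather than a proof, and in particular the genuinely technical steps — establishing the exponential gap $\varepsilon>0$, the discreteness and shift-closedness of $X(\lambda)$, the polynomial (not merely exponential-polynomial-in-$t$) nature of the coefficients $p_{\lambda,\xi}(f,g,\cdot)$ on all of $\mf a$, and the local uniformity in $(y,H)$ — are exactly the content of \cite{vdBanSchl87} and would need to be taken from there. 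Since the paper under review treats this as a black box as well, citing it is entirely appropriate; just be explicit that your text is a summary of the cited proof, not an independent argument.
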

\begin{remark}
 The uniformity in $x$ is not stated in \cite{vdBanSchl87} but it follows from (6.18) therein.
\end{remark}

\begin{example}
	In the case where $G/K$ is the upper half plane $\mathbb{H}$ a simplified version of this theorem can be stated as follows.
	Suppose $f\in E_{s-1/2}^\infty$, i.e. $f\in C^\infty(\mathbb{H})$ with $\Delta f=s(1-s)f$ 
	and the derivatives of $f$ satisfy some uniform pointwise exponential bounds.
	We lift $f$ to a function (also called $f$) on the sphere bundle $S\mathbb{H}$ which is constant on the fibers.
	Denote by $\phi_t$ the geodesic flow.
	Then if $s\notin \frac 12 \mathbb{Z}$
	\[
		(\phi_t)_\ast f(x)\sim e^{-ts} \left( \sum_{n=0}^\infty p^+_n(x) e^{-nt} \right) +
		e^{-t(1-s)} \left(\sum_{n=0}^\infty p_n^- (x) e^{-nt}\right) 
	\]
with $p_n^\pm$ being smooth.
If $s\in \frac 12 \mathbb{Z}$ the functions $p_n^\pm$ can be polynomials of degree one in $t$.
\end{example}
\begin{figure}
\centering
\begin{minipage}{.5\textwidth}
  \centering
  \frame{\includegraphics[width=.8\linewidth, trim=11.5cm 1cm 10cm 6cm,clip]{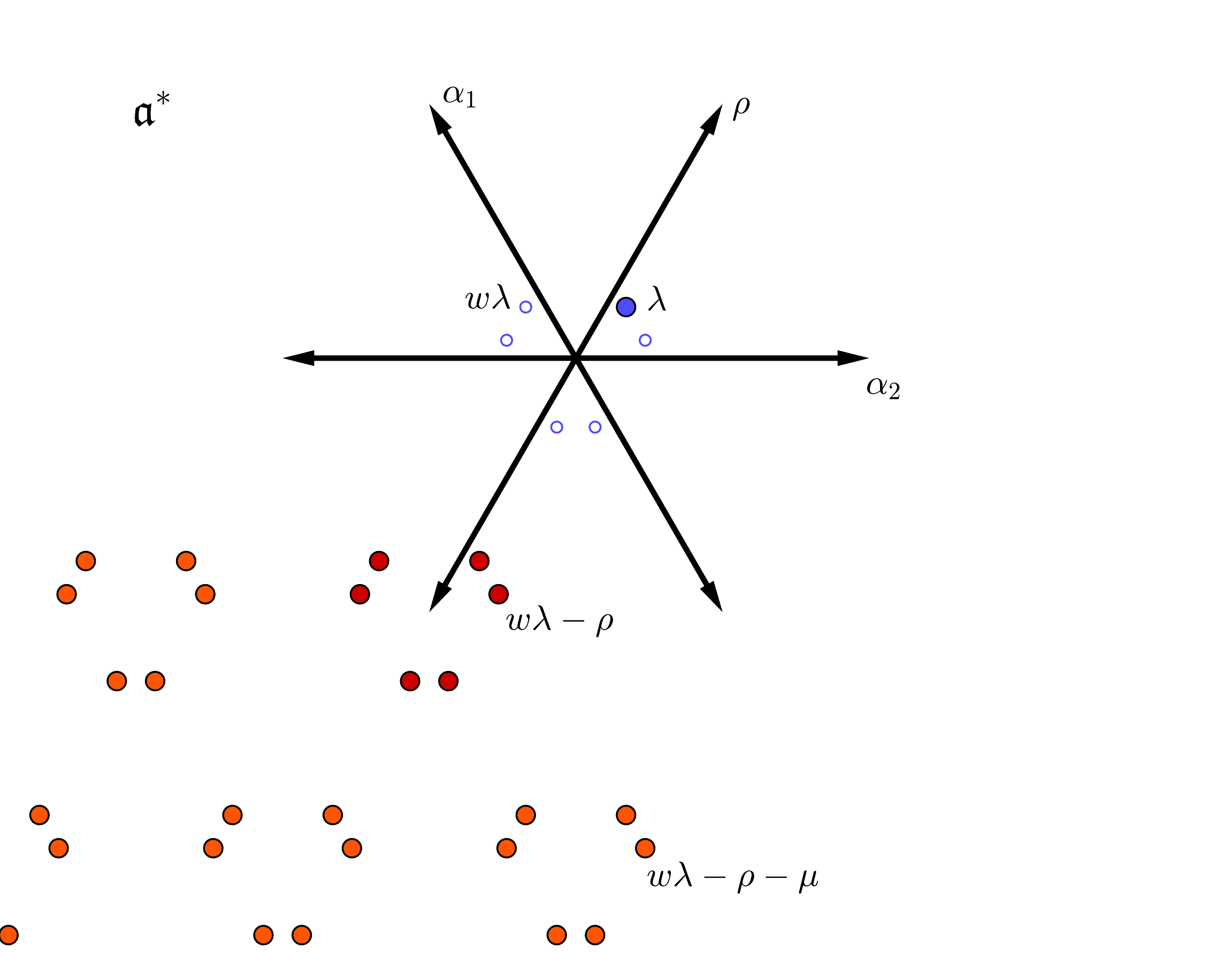}}%[trim=left bottom right top, clip]
\end{minipage}%
\begin{minipage}{.5\textwidth}
  \centering
  \frame{
  \includegraphics[width=.8\linewidth,trim=11.5cm 1cm 10cm 6cm,clip]{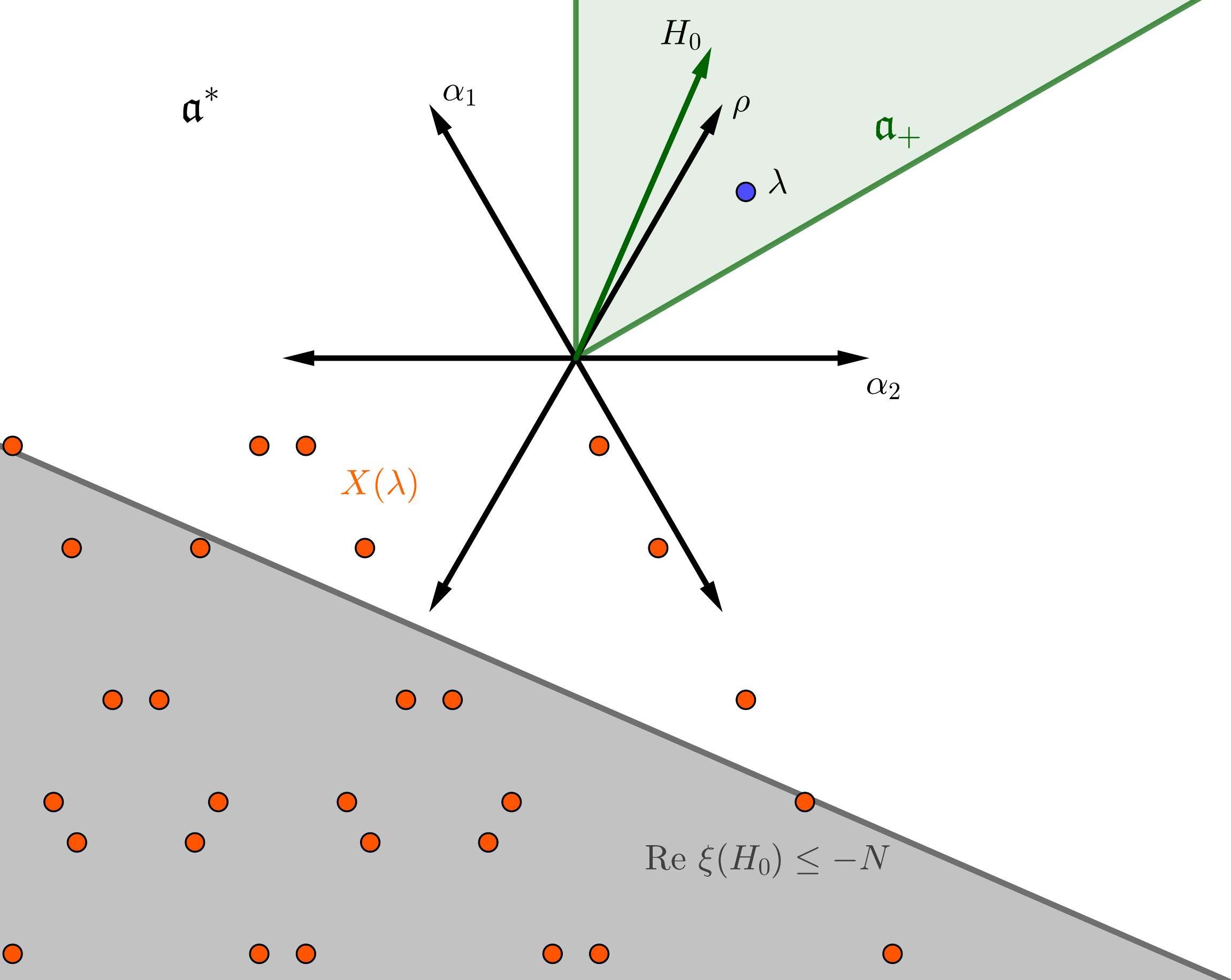}}
\end{minipage}
\caption{\label{fig:X_lambda} Real part of the exponents of the asymptotic expansion in Theorem~\ref{thm:expansion} for $G=SL(3,\R)$}
\label{fig:exponentsofexpansion}
\end{figure}

\begin{proof}[Proof of Theorem~\ref{thm:smoothsquarevanish}]
	First, we will consider the case $f\in E_\lambda^ \infty$.
By continuity there is a unit vector $H_0 \in \mf a_+$, a neighborhood $U$ of $H_0$ in the unit sphere of $\mf a$, and an open set $V$ in $K$ such that $$\Omega=\left\{k\exp(H)\colon k\in V, \frac{H}{\|H\|}\in U,\|H\|>R\right \}\subseteq C(Y_0,\varepsilon, R).$$
Let $N=\rho(H_0)$ such that without loss of generality
\begin{align}\label{eq:approximation}
 |f(k\exp(H)) - \sum_{w\in W} p_{\lambda,w\lambda-\rho}(f,k,H) e^{(w\lambda-\rho)(H)}| \leq C e^{(-\rho(H_0)-\varepsilon)\|H\|}
\end{align}
for all $k\in V, \frac H{\|H\|}\in U$.

We use the integral formula \eqref{eq:intKAK}
% \begin{align*}
%  \int_{G/K} f(x)dx=\int_K\int_{\mf a_+} f(k\exp(H)) \prod_{\alpha\in\Sigma^+} \sinh(\alpha(H))^{m_\alpha} dHdk. 
% \end{align*}
and observe that
\begin{align*}
 \int_{(R,\infty)U} &e^{-2(\rho(H_0)+\varepsilon)\|H\|}\prod_{\alpha\in\Sigma^+} \sinh(\alpha(H))^{m_\alpha} dH\\
 \leq& \int_{(R,\infty)U} e^{-2(\rho(H_0)+\varepsilon)\|H\|}e^{2\rho(H)}dH\leq  \int_{(R,\infty)U} e^{2(\rho(\frac H{\|H\|}-H_0)-\varepsilon)\|H\|}dH
\end{align*} which is finite after shrinking $U$ such that $\rho(\frac H{\|H\|}-H_0)<\varepsilon$ for $H\in U$. Consequently, the right hand side of \eqref{eq:approximation} and therefore also the left hand side of is square integrable on $\Omega$.

Since $f$ is $L^2$ and the approximation \eqref{eq:approximation} holds, $$\left|\sum_{w\in W} p_{\lambda,w\lambda-\rho}(f,k,H) e^{(w\lambda-\rho)(H)}\right|^2 \prod_{\alpha\in\Sigma^+} \sinh(\alpha(H))^{m_\alpha}$$ is integrable on $V\times (R,\infty)U$.
Hence, $\left|\sum_{w\in W} p_{\lambda,w\lambda-\rho}(f,k,H)
e^{(w\lambda-\rho)(H)}\right|^2 \prod_{\alpha\in\Sigma^+}
\sinh(\alpha(H))^{m_\alpha}$ is integrable on $(R,\infty)U$ for almost every
$k\in V$. Since $\sinh(x)\geq e^x/4$ for $x\geq \frac 12 \log 2$,
$$\left|\sum_{w\in W} p_{\lambda,w\lambda-\rho}(f,k,H)
e^{(w\lambda-\rho)(H)}\right|^2 e^{2\rho(H)}=\left|\sum_{w\in W}
p_{\lambda,w\lambda-\rho}(f,k,H) e^{w\lambda(H)}\right|^2$$ is integrable on
$(R,\infty)U$ for $R$ large enough. This is only possible if
$p_{\lambda,w\lambda-\rho}(f,k)$ vanishes on $\mf a$ for every $w\in W$ by
\cite[Lemma~8.50]{Kna86}. Since $p_{\lambda,w\lambda-\rho}(f,\bigcdot)$ is
smooth it vanishes identically on $V$.

We now show that it also vanishes on $VAN$.
For $n\in N$ \cite[Lemma~8.7]{vdBanSchl87} states for $f\in E_\lambda^\infty$
\[
	p_{\lambda,\xi}(f,n)=\sum_{\mu\in \N_0\Pi,\xi+\mu\in X(\lambda)}p_{\lambda,\xi+\mu}(f_\mu,e),\quad \xi\in X(\lambda),
\]
where $f_\mu\in L(\mc U(\mf g))f$ (where $L$ is the left regular representation) are specific joint eigenfunctions obtained by the Taylor expansion of $f$ in the direction of $n$ and $f_0=f$. 
For $\xi=w\lambda-\rho$ the only summand comes from $\mu=0$
since $\lambda\in i\mf a^\ast$ and $X(\lambda)=\{w\lambda-\rho-\mu \mid w\in W, \mu\in \N_0\Pi\}$.
In particular, $p_{\lambda,w\lambda-\rho}(f,n)=p_{\lambda,w\lambda-\rho}(f,e)$.

To deal with $a\in A$ we use \cite[Lemma~8.5]{vdBanSchl87}:
\[
	p_{\lambda,\xi}(f,a,H)=a^\xi p_{\lambda,\xi}(f,e,H+\log a),\quad f\in E_\lambda^\infty, \xi\in X(\lambda), H\in \mf a,
\]
where as usual $a^\xi=e^{\xi(\log a)}$.

Let us return to the situation that we achieved earlier, where $p_{\lambda,w\lambda-\rho}(f,k,H)=0$ for every $k\in V$ and $H\in \mf a$. 
But then 
\begin{align*}
	p_{\lambda,w\lambda-\rho}(f,kan,H)&=
	p_{\lambda,w\lambda-\rho}(L_{(ka)\inv}f,n,H)=
	p_{\lambda,w\lambda-\rho}(L_{(ka)\inv}f,e,H)\\
	&= 
	p_{\lambda,w\lambda-\rho}(L_{k\inv}f,a,H)=
	a^{w\lambda-\rho}p_{\lambda,w\lambda-\rho}(L_{k\inv}f,e,H)\\
	&=
	a^{w\lambda-\rho}p_{\lambda,w\lambda-\rho}(f,k,H)=0
\end{align*}
for every $k\in K, a\in A,n\in N$ and $w\in W$.
Hence, $p_{\lambda,w\lambda-\rho}(f,x)=0$ if $x$ is contained in the open set $VAN$.
This is exactly the assumption of \cite[Theorem~4.1]{vdBanSchl89LocalBD} in the case $I=I_\lambda$, i.e. $f$ is an eigenfunction for the whole algebra $\mathbb{D}(G/K)$ and is not only annihilated by an ideal of finite codimension.
Note that in this case $X(I)=X(\lambda)$.
We infer $f=0$.

It remains to show that the statement also holds for $f\in E_\lambda^\ast$.

% \begin{korollar}\label{cor:moderatesquarevanish}
% Let $f\in E_\lambda^\ast$, $\lambda\in i\mf a^\ast$, such that $f$ is square-integrable on $\Omega_{\varepsilon,R}(Y_0)$ for some $\varepsilon, R, Y_0$. Then $f=0$.
% \end{korollar}

 Since $C(Y_0,\varepsilon,R)$  is a fundamental system of neighborhoods  of $Y_0$ in the geodesic compactification and $G$ acts continuously on $X\cup X(\infty)$, there is a neighborhood $ V$ of $e$ in $G$ and $\varepsilon ', R'$ such that $V\inv C(Y_0,\varepsilon',R')\subseteq C(Y_0,\varepsilon,R)$. Let $\varphi_n$ be an approximate identity on $G$ with $\operatorname{supp} \varphi_n\subseteq V$, i.e. $\varphi_n \in C_c^\infty(G)$ is non-negative with $\int_G\varphi_n(g) dg =1$ and $\operatorname{supp}(\varphi_n)$ shrinks to $\{e\}$.  We consider $(\varphi_n\ast f)(x)=\int_G \varphi_n(g)f(g\inv x)dg$. Obviously, $\varphi_n\ast f\in E_\lambda^\infty$ since $L_x R_y(\varphi_n\ast f)=(L_x\varphi_n)\ast (R_yf), x,y\in G$.
 
Combining the already established case $f\in E_\lambda^\infty$ with
Lemma~\ref{la:conv_square_int} below we infer that $\varphi_n\ast f=0$ for all $n$
and therefore $f=0$. This completes the proof.  \end{proof}
\begin{lemma}\label{la:conv_square_int}
  $\varphi_n\ast f$ is square-integrable on $C(Y_0, \varepsilon',R')$.
 \end{lemma}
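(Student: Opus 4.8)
The plan is to prove this by a straightforward change-of-variables estimate: I would bound the $L^2$-norm of $\varphi_n\ast f$ over $C(Y_0,\varepsilon',R')$ by the $L^2$-norm of $f$ over $C(Y_0,\varepsilon,R)$, which is finite by hypothesis. The two ingredients are the $G$-invariance of the measure $dx$ on $X$ and the inclusion $V\inv C(Y_0,\varepsilon',R')\subseteq C(Y_0,\varepsilon,R)$ chosen in the proof above, together with $\operatorname{supp}\varphi_n\subseteq V$.

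First I would recall that $(\varphi_n\ast f)(x)=\int_G\varphi_n(g)f(g\inv x)\,dg$, the integration taking place over the compact set $\operatorname{supp}\varphi_n\subseteq V$. Since $f$ is continuous and the $G$-action on $X$ is continuous, the map $(g,x)\mapsto \varphi_n(g)f(g\inv x)$ is jointly measurable, so Minkowski's integral inequality gives
$$\|\varphi_n\ast f\|_{L^2(C(Y_0,\varepsilon',R'))}\le \int_G \varphi_n(g)\,\Big(\int_{C(Y_0,\varepsilon',R')}|f(g\inv x)|^2\,dx\Big)^{1/2}\,dg.$$
For fixed $g\in\operatorname{supp}\varphi_n$, substituting $y=g\inv x$ and using $G$-invariance of $dx$ turns the inner integral into $\int_{g\inv C(Y_0,\varepsilon',R')}|f(y)|^2\,dy$. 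Because $g\inv C(Y_0,\varepsilon',R')\subseteq V\inv C(Y_0,\varepsilon',R')\subseteq C(Y_0,\varepsilon,R)$ and the integrand is nonnegative, this is at most $\|f\|_{L^2(C(Y_0,\varepsilon,R))}^2$, a finite constant independent of $g$. Plugging this in and using $\int_G\varphi_n\,dg=1$ yields $\|\varphi_n\ast f\|_{L^2(C(Y_0,\varepsilon',R'))}\le \|f\|_{L^2(C(Y_0,\varepsilon,R))}<\infty$, which is the claim.

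I do not expect any serious obstacle: the only points needing care are the joint measurability required to invoke Minkowski's integral inequality (immediate from continuity of $f$ and of the action) and keeping straight the direction of the inclusion $V\inv C(Y_0,\varepsilon',R')\subseteq C(Y_0,\varepsilon,R)$, so that the shifted region $g\inv C(Y_0,\varepsilon',R')$ lands inside the set on which $f$ is assumed square-integrable. If one prefers to avoid Minkowski's inequality altogether, the identical bound follows from Jensen's inequality in the form $|(\varphi_n\ast f)(x)|^2\le \int_G\varphi_n(g)|f(g\inv x)|^2\,dg$ (valid since $\varphi_n\,dg$ is a probability measure), integrating in $x$ over $C(Y_0,\varepsilon',R')$ and applying Fubini with the same change of variables.
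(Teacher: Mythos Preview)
Your proposal is correct and yields the same bound $\|\varphi_n\ast f\|_{L^2(C(Y_0,\varepsilon',R'))}\le \|f\|_{L^2(C(Y_0,\varepsilon,R))}$ as the paper, using the same two ingredients (the inclusion $V\inv C'\subseteq C$ and $G$-invariance of $dx$). The only cosmetic difference is packaging: the paper obtains the estimate by testing against $h\in C_c(C')$ and applying Cauchy--Schwarz on $L^2(V\times C')$, whereas you apply Minkowski's integral inequality (or Jensen) directly; these are equivalent formulations of the same argument.
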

\begin{proof}[Proof of {Lemma~\ref{la:conv_square_int}}]
 Abbreviate $C'=C(Y_0, \varepsilon',R')$ and $C=C(Y_0, \varepsilon',R')$. It suffices to show that 
 \begin{align*}
  \left|\int_{C'} h(x)(\varphi_n\ast f)(x)dx\right|\leq B\|h\|_{L^2(C')}
 \end{align*}
for $h\in C_c(C')$ with a constant $B$ independent of $h$. 

Let us write $|h(x)\varphi_n(g)f(g\inv x)| =(|h|^2(x)\varphi_n(g))^{1/2} (|f|^2(g\inv x)\varphi_n(g))^{1/2}$ and use the Cauchy-Schwarz inequality of $L^2(V\times C')$ to obtain
\begin{align*}
 \left|\int_{C'} h(x)(\varphi_n\ast f)(x)dx\right|&\leq \int _V \int_{C'}|h(x)\varphi_n(g)f(g\inv x)|dx dg \\
 &\leq \left(\int_{V}\int_{C'} |h|^2(x) \varphi_n(g) dx dg \int_V \int_{C'} |f|^2(g\inv x) \varphi_n(g) dx dg\right)^{1/2}\\
 &\leq \|h\|_{L^2(C')} \left(\int_V \int_{C} |f|^2(x) \varphi_n(g) dx dg\right)^{1/2}\\
 &= \|h\|_{L^2(C')} \|f\|_{L^2(C)}
\end{align*}
where we used $V\inv C'\subseteq C$ in the last inequality. This finishes the proof.
\end{proof}

\subsection{Maximal Satake compactification}\label{sec:satabsence}
In this section we prove a statement analogous to Theorem~\ref{thm:smoothsquarevanish} for the maximal Satake compactification.
First of all we remark that each neighborhood of an element in the orbit $Gx_\emptyset\subseteq \ov X^ {\max}$ contains a neighborhood $C(Y_0,\varepsilon, R)$. Hence, we have the following proposition.
\begin{proposition}
 Let $f\in E_\lambda^\ast$, $\lambda\in i\mf a^\ast$, such that $f$ is square-integrable in some neighborhood of an element in $Gx_\emptyset\subseteq \ov X^ {\max}$. Then $f=0$.
\end{proposition}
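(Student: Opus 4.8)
The plan is to reduce the statement to Theorem~\ref{thm:smoothsquarevanish}. The geometric input needed is that in $\ov X^{\max}$ every neighborhood of a point of the boundary orbit $Gx_\emptyset$ already contains a geodesic cone: for every $g\in G$ and every neighborhood $U$ of $gx_\emptyset$ in $\ov X^{\max}$ there are a unit vector $Y_0\in T_{x_0}X$, $\varepsilon>0$ and $R>0$ with $C(Y_0,\varepsilon,R)\subseteq U\cap X$. Given this, the hypothesis that $f$ is square-integrable near $gx_\emptyset$ provides such a cone on which $f\in L^2$, and Theorem~\ref{thm:smoothsquarevanish} forces $f=0$.

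For the geometric input I would first reduce to the base point $x_\emptyset$. Since $G$ acts continuously on $\ov X^{\max}$, the set $g^{-1}U$ is a neighborhood of $x_\emptyset$; if we produce $C(Y_0,\varepsilon,R)\subseteq g^{-1}U\cap X$, then $g\cdot C(Y_0,\varepsilon,R)$ is an open subset of $X\cup X(\infty)$ — as $G$ also acts by homeomorphisms on the geodesic compactification — which lies in $U$ and is a neighborhood of the boundary point $g[\gamma_{Y_0}]\in X(\infty)$. Since the geodesic cones form a fundamental system of neighborhoods at every point of $X(\infty)$, some such cone $C(Y_1,\varepsilon_1,R_1)$ is contained in $g\cdot C(Y_0,\varepsilon,R)\subseteq U\cap X$, which suffices.

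For $x_\emptyset$ itself: here $I=\emptyset$, so $\mf a^\emptyset=\{0\}$, the distinguished parameter $H_\infty$ is $0$, and the description in Section~\ref{sec:satake} shows that the sets $V\exp\{H\in\ov{\mf a_+}\mid \alpha(H)>R\ \text{for all}\ \alpha\in\Pi\}x_0$, with $V$ a neighborhood of $e$ in $K$ and $R\to\infty$, are the intersections with $X$ of a fundamental system of neighborhoods of $x_\emptyset$. Fix a unit vector $H_0\in\mf a_+$ and identify $T_{x_0}X$ with $\mf p$, the orthocomplement of $\mf k$ in $\mf g$. The polar map $(kM,H)\mapsto\Ad(k)H$ is a diffeomorphism from $K/M\times\mf a_+$ onto the regular locus of $\mf p$, hence a local diffeomorphism near $(eM,H_0)$; together with $\Ad(M)|_{\mf a}=\id$ this lets us write every unit $Y\in\mf p$ sufficiently close to $H_0$ as $Y=\Ad(k)H'$ with $k\in K$ close to $e$ and $H'\in\mf a_+$ close to $H_0$. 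Then $\exp(tY)x_0=k\exp(tH')x_0$ (using $\exp(\Ad(k)Z)=k\exp(Z)k^{-1}$ and $Kx_0=x_0$), and for $t$ large $\alpha(tH')>R$ for all $\alpha\in\Pi$, so $\exp(tY)x_0\in V\exp\{H\in\ov{\mf a_+}\mid \alpha(H)>R\ \text{for all}\ \alpha\in\Pi\}x_0$. Recasting ``$\|Y-H_0\|$ small'' as a small-angle bound on $Y$ about $H_0$, this yields $C(H_0,\varepsilon,R')\subseteq V\exp\{H\in\ov{\mf a_+}\mid \alpha(H)>R\ \text{for all}\ \alpha\in\Pi\}x_0$ for suitable $\varepsilon,R'$, completing the argument.

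The proof is soft and I do not expect a genuine obstacle; the only delicate point is the local behaviour of the polar decomposition $\mf p_{\mathrm{reg}}\cong(K/M)\times\mf a_+$ near a regular direction, which is exactly what upgrades proximity of $Y$ to $H_0$ to proximity of its $K$- and $\mf a$-parts to $e$ and $H_0$ — together with the harmless choice of an $M$-coset representative near $e$, legitimate because $M$ centralizes $\mf a$.
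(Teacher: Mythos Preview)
Your proposal is correct and takes exactly the same approach as the paper: the paper simply remarks that every neighborhood of a point in $Gx_\emptyset\subset\ov X^{\max}$ contains a cone $C(Y_0,\varepsilon,R)$ and then invokes Theorem~\ref{thm:smoothsquarevanish}. You supply the details the paper omits, via the polar decomposition of $\mf p_{\mathrm{reg}}$ near a regular direction; the only imprecision is that $g\cdot C(Y_0,\varepsilon,R)\subset X$ is not literally a neighborhood of a boundary point in $X\cup X(\infty)$, but this is fixed by passing to the closure of the cone in the geodesic compactification before translating by $g$.
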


The goal is to prove this statement for general neighborhoods in $\ov X^{\max}$. 
\begin{theorem}\label{thm:satakesquarevanish}
 Let $f\in E_\lambda^\ast$, $\lambda\in i\mf a^\ast$, such that $f$ is square-integrable in some neighborhood of an element $x_\infty \in \partial X^ {\max}$.
 %= k\exp(H_\infty)x_I\subseteq \ov X^ {\max}$ with $H_\infty\in \ov{\mf a^I_+}$. 
 Then $f=0$.
\end{theorem}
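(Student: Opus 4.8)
The strategy is to mimic the geodesic case (Theorem~\ref{thm:smoothsquarevanish}) but using the finer asymptotic expansions of van den Ban--Schlichtkrull \cite{vdBanSchl89LocalBD} along the singular directions. As in the proof of Theorem~\ref{thm:smoothsquarevanish}, the approximate-identity argument of Lemma~\ref{la:conv_square_int} (applied with neighborhoods in $\ov X^{\max}$ in place of the cones $C(Y_0,\varepsilon,R)$, using that $G$ acts continuously on $\ov X^{\max}$ and that the $V(K\cap M_J)\exp\{\ldots\}x_0$ form a fundamental system of neighborhoods) reduces everything to the case $f\in E_\lambda^\infty$. So assume $f\in E_\lambda^\infty$ is square-integrable in a neighborhood $\mc U$ of $x_\infty = k\exp(H_\infty)x_I$ with $I\subsetneq\Pi$, $k\in K$, $H_\infty\in\ov{\mf a_+^I}$, and $J = \{\alpha\in I\mid \alpha(H_\infty)=0\}$.

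\textbf{Key steps.} First I would fix a direction $H_1\in\mf a_{I,+}$ (so $\alpha(H_1)>0$ for $\alpha\in\Pi\smallsetminus I$ and $\alpha(H_1)=0$ for $\alpha\in I$) and consider the rays $\exp(H_\infty + tH_1)$ as $t\to\infty$; by the neighborhood description in Section~\ref{sec:satake}, for $k'$ near $k$ (and, when $J\neq\emptyset$, after an extra factor in $K\cap M_J$) and $H$ near $H_\infty$ the points $k'\exp(H+tH_1)x_0$ lie in $\mc U\cap X$ for $t$ large. Then I would invoke the asymptotic expansion of \cite{vdBanSchl89LocalBD} for the behavior of $f$ into the singular direction $H_1$ along the wall: there $f(k'\exp(H+tH_1))$ expands in powers $e^{t\xi(H_1)}$ with leading exponents governed by $X(\lambda)$ restricted to $\mf a_I$, and the ``coefficients'' are themselves functions on the lower-rank symmetric space associated to $M_I$ (joint eigenfunctions there, depending smoothly on the remaining parameters), decorated by polynomials in $t$. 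Feeding the square-integrability of $f$ on $\mc U$ into the Cartan integral formula \eqref{eq:intKAK} — where along $H\mapsto H+tH_1$ the Jacobian $\prod_{\alpha\in\Sigma^+}\sinh(\alpha(H))^{m_\alpha}$ grows like $e^{2\rho_I(tH_1)}$ with $\rho_I$ the half-sum of roots not in $\langle I\rangle$ — forces, exactly as in the regular case (using $\lambda\in i\mf a^\ast$, so $\Re(w\lambda)=0$, and a Knapp-type linear-independence lemma in the variable $t$), the vanishing of the top boundary coefficients of $f$ along that wall, on an open subset of $K$ (times $K\cap M_J$).

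\textbf{Conclusion and main obstacle.} Having killed the leading coefficients on an open set, I would then propagate the vanishing exactly as before: use the $N$- and $A$-equivariance of the boundary coefficients (the analogues of \cite[Lemma~8.7 and Lemma~8.5]{vdBanSchl87} in the parabolic setting, now for $N_I$ and $A_I$), together with the $(K\cap M_I)$-equivariance coming from the residual symmetry along the stratum, to conclude that the relevant coefficient vanishes on an open, hence $P_I$-invariant-up-to-the-structure, subset; then the vanishing theorem \cite[Theorem~4.1]{vdBanSchl89LocalBD} (in the form used at the end of the proof of Theorem~\ref{thm:smoothsquarevanish}, with $I=I_\lambda$ so that $X(I)=X(\lambda)$) gives $f=0$. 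The main obstacle I expect is purely bookkeeping but genuinely delicate: correctly matching the exponents and Jacobian growth rates \emph{within} the wall — i.e. identifying which restricted exponents $\xi|_{\mf a_I}$ are ``leading'' once one has already normalized by $e^{-\rho(H_\infty)}$ on the bounded part and $e^{-\rho_I(tH_1)}$ on the unbounded part — and ensuring that the linear-independence-of-characters argument still applies after this restriction (degeneracies can occur when $w\lambda|_{\mf a_I}$ coincide for different $w\in W$, i.e. for singular $\lambda$, which, just as in the regular-direction case, only makes the polynomial degrees in $t$ larger but does not break the argument). Handling the extra $K\cap M_J$ factor when $H_\infty$ itself lies on a deeper wall (the case $J\neq\emptyset$) is the one additional wrinkle not present in Theorem~\ref{thm:smoothsquarevanish}.
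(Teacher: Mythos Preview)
Your overall architecture is right (reduce to $E_\lambda^\infty$, expand along the wall direction $\mf a_{I,+}$ via \cite[Thm~1.5]{vdBanSchl89LocalBD}, kill leading terms by integrability plus a Knapp-type lemma, then invoke \cite[Thm~4.1]{vdBanSchl89LocalBD}), and several simplifications you worry about are actually free: the paper immediately replaces $x_\infty$ by a nearby boundary point with $H_\infty\in\mf a^I_+$, so the case $J\neq\emptyset$ never arises. Likewise, the ``exponent/Jacobian matching within the wall'' that you flag as the main obstacle goes through exactly as in the regular case.

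The genuine gap is elsewhere. After the integrability argument you obtain the vanishing of the \emph{wall} coefficients $p_{I,\xi}(f,k\exp(H^I),H_I)$ only for $k$ in an open set $V\subseteq K$ and $H^I$ in a \emph{bounded} open set $U^I\subseteq\mf a^I_+$. The uniqueness theorem \cite[Thm~4.1]{vdBanSchl89LocalBD} you want to cite at the end needs the \emph{chamber} coefficients $p_{\lambda,w\lambda-\rho}(f,k)$ to vanish on an open set; these are different objects, and your proposed ``$N_I$- and $A_I$-equivariance plus $(K\cap M_I)$-equivariance'' acts on the wall coefficients, not on the chamber ones, so it does not bridge the two. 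The paper supplies exactly this bridge in two steps you do not mention: first, the map $m\mapsto p_{I,\xi}(f,xm,H_I)$ is \emph{real analytic} on $M_I$ \cite[Cor.~2.5]{vdBanSchl89LocalBD}, which upgrades the vanishing from the bounded set $U^I$ to all of $\mf a^I$; second, the ``double expansion'' \cite[Thm~3.1]{vdBanSchl89LocalBD},
\[
p_{I,\xi}(f,g\exp(tH^I),H_I)\sim\sum_{\eta|_{\mf a_I}=\xi}p_{\lambda,\eta}(f,g,H_I+tH^I)e^{t\eta(H^I)},\qquad t\to\infty,
\]
then forces $p_{\lambda,w\lambda-\rho}(f,k)=0$ for $k\in V$ (since $\Re(w\lambda-\rho+\rho)|_{\mf a_I}=0$). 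Only after this does the argument rejoin the endgame of Theorem~\ref{thm:smoothsquarevanish}. Without the analyticity step your vanishing is trapped on a bounded piece of the wall and you cannot let $t\to\infty$ in the $H^I$ direction; without the second expansion you never touch the chamber coefficients that the uniqueness theorem actually consumes.
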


\begin{proof}
 By the same reasoning as in the proof of Theorem~\ref{thm:smoothsquarevanish} we can assume $f\in E_\lambda^\infty$. Moreover, we can assume that $x_\infty=k \exp(H_\infty)x_I$ with $k\in K$ and $H_\infty\in {\mf a^I_+}$ (instead of $H_\infty\in \ov{\mf a^I_+}$) since every neighborhood of $k\exp(H_\infty)x_I$ contains an element $k'\exp(H_\infty')x_I$ with $H_\infty'\in \mf a^I_+$. 
 
 Let $\Omega=V\exp (U)x_0\subseteq X$ with $U\coloneqq \{H\in \ov{\mf a_+}\mid |\alpha(H)-\alpha(H_\infty)|< \varepsilon,\alpha\in I, \alpha(H)>R,\alpha\not\in I\}$, so that $\Omega$ is the intersection of a neighborhood of $x_\infty$ with the interior of $\ov X^ {\max}$. Define $U^I\coloneqq \{H^I\in\mf a^I\mid |\alpha(H^I)-\alpha(H_\infty)|<\varepsilon, \alpha\in I\}$ which is a bounded open set in $\mf a^I$ since the set of linear forms $I$ restricted to $\mf a^I$ is linear independent. W.l.o.g. $U^I\subseteq \mf a^I_+$ has positive distance to the boundaries. Let $U_I\coloneqq \{H_I\in \mf a_I\mid \alpha(H_I)>C, \alpha\in \Pi\smallsetminus I\}\subseteq \mf a_{I,+}$ so that $U_I+U^I\subseteq U$ for $C$ large enough.
 
 \begin{figure}
  \includegraphics[width=.5\textwidth,trim=5cm 3cm 13cm 3cm, clip]{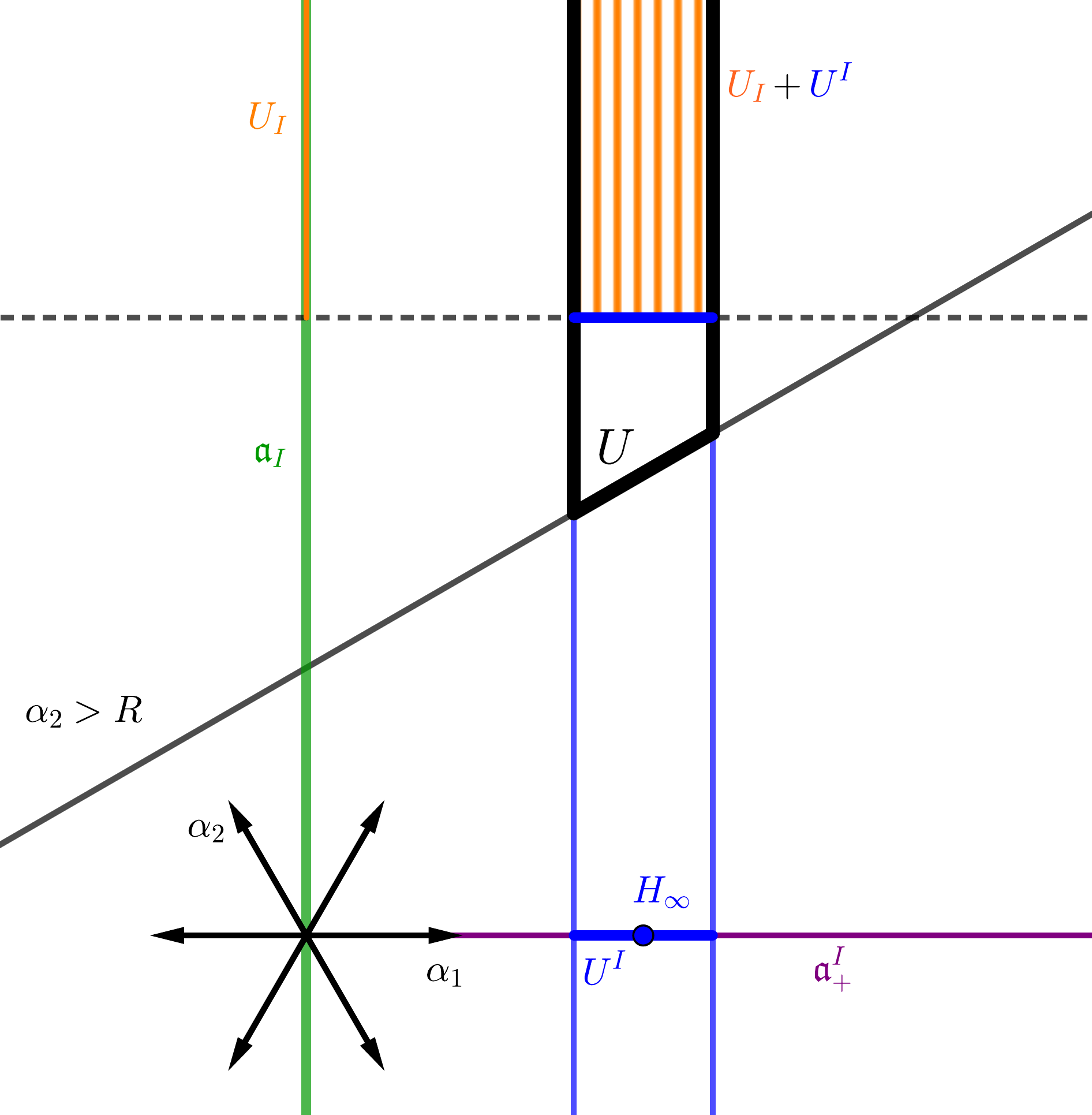}
  \caption{Decomposition of $U$ for $G=SL(3,\R)$ and $I=\{\alpha_1\}$.}
 \end{figure}

 As in Theorem~\ref{thm:smoothsquarevanish} we use the integral formula \eqref{eq:intKAK} % for $G=K\exp \ov{\mf a_+}K$ 
 to obtain 
 $$\int_{U\subseteq \mf a_+} |f|^2(k\exp(H))\prod \sinh(\alpha(H))^{m_\alpha}dH<\infty$$ for almost every $k\in V$.
 Therefore, 
 $$\int_{U_I\subseteq \mf a_{I,+}} |f|^2(k\exp(H^I)\exp(H_I))\prod \sinh(\alpha(H_I+H^I))^{m_\alpha}dH_I<\infty$$ for almost every $k\in V$ and $H^I\in U^I\subseteq \mf a^I$ (with suitable Lebesgue measures on $\mf a_I$ and $\mf a^I$). 
 
 The property that $U^I\subseteq \mf a^I_+$ has positive distance to the boundaries implies that $\alpha(H_I+H^I)>\varepsilon$ and hence $$\prod_{\alpha\in \Sigma^+} \sinh(\alpha(H_I+H^I))^{m_\alpha}\geq C e ^ {2\rho(H_I)},\quad H_I\in U_I,H^I\in U^I.$$
  Therefore, $|f|^2(k\exp(H^I)\exp(H_I))e^{2\rho(H_I)}$ is integrable on $U_I$.

  Similarly to the proof of Theorem~\ref{thm:smoothsquarevanish} we use an asymptotic expansion for $f$, but this time we have to consider asymptotics along the boundary of the positive Weyl chamber instead of regular directions.
  \begin{theorem}[{\cite[Thm~1.5]{vdBanSchl89LocalBD}}]
 There exists a finite set $S(\lambda,I)\subseteq \mf a_I^\ast$ such that for each $f\in E_\lambda^\infty$, $g\in G$, and $\xi \in X(\lambda,I)= S(\lambda,I) - \N_0\Pi|_{\mf a_I}$ there is a unique polynomial $p_{I,\xi}(f,x)$ on $\mf a_I$ which is smooth in $x$ such that 
 $$f(g\exp(tH_0))\sim\sum_{\xi\in X(\lambda,I)} p_{I,\xi}(f,g,tH_0) e^{t\xi(H_0)},\quad t\to\infty,$$ at every $H_0\in \mf a_{I,+}$, i.e. for every $N$ there exist a neighborhood $U$ of $H_0$ in $\mf a_{I,+}$, a neighborhood $V$ of $x$ in $G$, $\varepsilon>0$, $C>0$ such that 
 $$\left |f(y\exp(tH)) - \sum_{\Re \xi(H_0)\geq -N} p_{I,\xi}(f,y,tH) e^{t\xi(H)}\right| \leq C e^{(-N-\varepsilon)t}$$ for all $y\in V, H\in U$, $t\geq 0$.
\end{theorem}
\begin{remark}
 The uniformity in $x$ is not stated in \cite{vdBanSchl89LocalBD} but it follows from Proposition~1.3 therein.
\end{remark}
Let $H_0\in \mf a_{I,+}$, $\|H_0\|=1$. After shrinking we can assume that 
$$\left|f(k\exp(H^I)\exp(H_I)) - \sum_{\Re \xi(H_0)\geq -\rho(H_0)} p_{I,\xi}(f,k\exp(H^I),H_I) e^{\xi(H_I)}\right| \leq C e^{(-\rho(H_0)-\varepsilon)\|H_I\|}$$ for all $k\in V, H^I\in U^I$, and $\frac{H_I}{\|H_I\|}$ in some neighborhood $\tilde U_I$ of  $H_0$ in $\mf a_{I,+}$ such that $(R',\infty)\tilde U_I\subseteq U_I$.

The error term $e^{(-\rho(H_0)-\varepsilon)\|H_I\|}$ satisfies $$e^{2(-\rho(H_0)-\varepsilon)\|H_I\|}e^{2\rho(H_I)}=e^{2(\rho(-H_0+\frac{H_I}{\|H_I\|}) -\varepsilon)\|H_I\|}\leq e^{-\varepsilon\|H_I\|}$$
if $\tilde U_I$ is sufficiently small. Since $e^{-\varepsilon\|H_I\|}$ is integrable on $(R',\infty)\tilde U_I$ the same is true for $$\left |\sum_{\Re \xi(H_0)\geq -\rho(H_0)} p_{I,\xi}(f,k\exp(H^I),H_I) e^{(\xi+\rho)(H_I)}\right |^2 .$$
Using \cite[Lemma~8.50]{Kna86}  we obtain that  $p_{I,\xi}(f,k\exp(H^I),H_I) = 0$ if $\Re (\xi+\rho)(H_I) \geq 0$ for almost every $k\in V$ and $H^I\in U^I$. Since $p_{I,\xi}(f,\bigcdot,H_I)$ is smooth, this holds for every $k\in V$ and $H^I\in  U^I$.

By \cite[Corollary~2.5]{vdBanSchl89LocalBD} the mapping $M_I\ni m\mapsto p_{I,\xi}(f,xm,H_I)$, $x\in G$, is real analytic. Therefore, $\mf a^I\ni H^I\mapsto p_{I,\xi}(f,k\exp{H^I}, H_I)$ is real analytic as well and vanishes on the open set $U^I$ for $k\in V$, $\Re (\xi+\rho)(H_I)\geq 0$. Hence it vanishes on  $\mf a^I$ identically. 

In a last step of the proof we show that the vanishing of $p_{I,\xi}(f,k)$ for $\Re(\xi+\rho)(H_I)\geq 0$ implies that the expansion coefficients $p_{\lambda,\eta}(f,k)$ from Theorem~\ref{thm:expansion} vanish for all $\eta\in W\lambda-\rho$ and $k\in V$. For this purpose we use the following expansion for the polynomial $p_{I,\xi}$.
\begin{proposition}[{\cite[Theorem~3.1]{vdBanSchl89LocalBD}}]
 Let $f\in E_\lambda^\infty, g\in G, $ and $\xi\in X(I,\lambda)$.
 \begin{enumerate}
  \item For every $H_I\in \mf a_{I,+}$ and $H^I\in \mf a^I_+$ the following asymptotic expansion holds:
  $$p_{I,\xi}(f,g\exp(tH^I),H_I)\sim \sum _{\eta \in w\lambda -\rho - \N_0\Pi, \eta|_{\mf a_I}=\xi} p_{\lambda,\eta}(f,g,H_I+tH^I) e^{t\eta(H^I)}.$$
  \item For all $\eta = w\lambda -\rho - \N_0\Pi$ with $\eta|_{\mf a_I}\not\in X(\lambda,I)$ we have $p_{\lambda,\eta}(f,x)=0$.
 \end{enumerate}

\end{proposition}

Let $\eta=w\lambda-\rho$, $w\in W$, and $k\in V, H_I\in U_I$. If $\eta|_{\mf a_I}\not \in X(I,\lambda)$, then $p_{\lambda,\eta}(f,k)=0$. If $\eta|_{\mf a_I} = \xi \in X(I,\lambda)$, then $\Re(\xi+\rho)(H_I) = \Re w\lambda(H_I)=0\geq 0$. Therefore,
$p_{I,\xi}(f,k\exp{H^I}, H_I)= 0$ for all $H^I\in\mf a^I$ by the previous paragraph. It follows that the asymptotic expansion has every coefficient vanishing (see \cite[Lemma~3.2]{vdBanSchl87}), in particular  $ p_{\lambda,\eta}(f,k,H_I+tH^I)=0$, $H_I\in U_I$, $H^I\in \mf a^I$. Since $ p_{\lambda,\eta}(f,k)$ is a polynomial, this implies $p_{\lambda,\eta}(f,k)=0$.
Hence in both cases $p_{\lambda,w\lambda-\rho}(f,k)=0$ for $k\in V$.
The remainder of the proof proceeds the same way as the proof of Theorem~\ref{thm:smoothsquarevanish}.
\end{proof}

\subsection{Proof of Theorem~\ref{thm:main_intro}}\label{sec:conclusion}
Let $\ov X$ be one of the compactifications $X\cup X(\infty)$ or $\ov X^ {\max}$. 

Recall that the \emph{wandering set} $\w(\G,\ov X)$ is defined to be the points $x\in \ov X$  for which there is a neighborhood $U$ of $x$ such that $\g U\cap U\neq \emptyset$ for at most finitely many $\g\in \G$. Clearly, $\w(\G,\ov X)$ is open, $\G$-invariant and contains  $X$. Theorem~\ref{thm:main_intro} is a simple consequence of Theorem~\ref{thm:modgrowth} combined with Theorem~\ref{thm:smoothsquarevanish}, respectively \ref{thm:satakesquarevanish}.

\begin{proof}[Proof of Theorem~\ref{thm:main_intro}]
  Let $x\in \w(\G,\ov X) \cap \partial \ov X$. Hence, there is an open subset $U$ of $\ov X$ containing $x$ such that $\{\g\mid \g U\cap U\neq \emptyset\}$ contains $N\in \N$ elements. Let $\lambda\in i\mf a^\ast$ and $f\in L^2(\G\backslash X)$ a joint eigenfunction of $\mathbb D(X)$ for the character $\chi_\lambda$. Let $\ov f\in E_\lambda$ be $\G$-invariant lift of $f$ to $X$. Then 
  \begin{align*}
   \|\ov f\|^2_{L^2(U)} &= \int _U |\ov f|^2 = \int_{\G\backslash X} \sum_{\g\in \G} 1_U(\g y) |\ov f|^2(\g y) d (\G y) =  \int_{\G\backslash X} \# \{\g\mid \g y\in U\} | f|^2(\G y) d (\G y)\\
   &\leq N \|f\|^2_{L^2(\G\backslash X)}<\infty.
  \end{align*}
 Hence, $f$ is $L^2$ on $U$ and $f$ is of moderate growth by Theorem~\ref{thm:modgrowth}. Using Theorem~\ref{thm:smoothsquarevanish} or \ref{thm:satakesquarevanish} finishes the proof.
\end{proof}

\section{Examples}\label{sec:examples}
In this section we discuss three classes of examples that satisfy the wandering condition of Theorem~\ref{thm:main_intro}. 
As mentioned in the introduction the condition is satisfied for geometrically finite discrete subgroups of $PSO(n,1)$ of infinite covolume.

\subsection*{Products}
The most basic example is the case of products. Let $X=X_1\times X_2$ be the product of two symmetric spaces of non-compact type where $X_i = G_i/K_i$. 
Let $\Gamma\leq G_1\times G_2$ be a discrete torsion-free subgroup that is the product of two discrete torsion-free subgroups $\G_i\leq G_i$.
Then it is clear that the spectral theory of $\G\backslash X$ is completely determined by the spectral theory of the two factors.
In particular, since the algebra $\mathbb D(G/K)$ is generated by $\mathbb D(G_i/K_i)$, $i=1,2$, there are no principal joint eigenvalues if the same holds for one of the factors.
The same statement can be obtained by Theorem~\ref{thm:main_intro} using the maximal Satake compactification.
Indeed, by \cite[Prop. I.4.35]{BJ} it holds that the maximal Satake compactification of $X$ is the product of the maximal Satake compactifications of $X_i$, i.e. $\ov X ^{\max}=  \ov {X_1} ^{\max} \times \ov {X_2} ^{\max}$.
Then it is clear from the definition of the wandering set that $\w(\G,\ov X^{\max})=\w(\G_1,\ov {X_1} ^{\max})\times \w(\G_2,\ov {X_2} ^{\max})$. 
Hence, the wandering condition $\w(\G,\ov X ^{\max} )\cap \partial \ov X^ {\max} \neq \emptyset$ is fulfilled if and only if it is fulfilled for one of the actions $\G_i\curvearrowright \ov {X_i}^{\max}$. 

\subsection*{Selfjoinings}
A more interesting class of examples is given by selfjoinings of locally symmetric spaces.  
These are given as follows. As above let $X=X_1\times X_2$ be the product of two symmetric spaces of non-compact type where $X_i = G_i/K_i$. Now, let $\Upsilon$ be a discrete group and $\rho_i\colon \Upsilon\to G_i$, $i=1,2$, two representations into real semisimple non-compact Lie groups with finite center. 
We assume that $\rho_1$  has finite kernel and discrete image.
We want to consider the subgroup $\G$ of $G_1\times G_2$ given by $\G=\{(\rho_1(\sigma),\rho_2(\sigma))\colon \sigma\in \Upsilon\}$ which is discrete. 
We assume that $ \G$ is  torsion-free (e.g.  if $\Upsilon$ is torsion-free). 
In contrast to the previous example the locally symmetric space $\G\backslash X$ is not a product of two locally symmetric spaces anymore, so also the spectral theory cannot be reduced to some lower rank factors. 
However, we can exploit that the globally symmetric space is still a product and consider the maximal Satake compactification which is given by $\ov X ^{\max}=  \ov {X_1} ^{\max} \times \ov {X_2} ^{\max}$.
Since $\rho_1(\Upsilon)$ is discrete, it acts properly discontinuously on $X_1$.
Hence every point of $X_1$ is wandering for the action of $\rho_1(\Upsilon)$.
It follows easily that $X_1\times  \ov {X_2} ^{\max}$ is contained in the wandering set $\w(\G,\ov X^{\max})$ of the action $\G\curvearrowright \ov X^ {\max}$.
Therefore, the wandering condition is fulfilled. Indeed, $\w(\G,\ov X^{\max}) \cap \partial\ov X^{\max} \supseteq X_1\times \partial \ov {X_2} ^{\max}  \neq \emptyset$.

\subsection*{Anosov subgroups}
The result of Lax and Phillips \cite{laxphil} is in particular true if we consider a (non-cocompact) convex-cocompact subgroup of $PSO(n,1)$. 
Anosov subgroups as introduced by Labourie \cite{Lab06}  for surface groups and generalized to arbitrary word hyperbolic groups by Guichard and Wienhard \cite{GW12} generalize convex-cocompact subgroups to higher rank symmetric spaces. 
For such $\G$ we have the following proposition.
\begin{proposition}
 Let $\G$ be a torsion-free Anosov subgroup that is not a cocompact lattice in a rank one Lie group.
 Then the wandering condition $\w(\G, \ov X^ {\max})\cap \partial \ov X^ {\max} \neq \emptyset$ is fulfilled.
\end{proposition}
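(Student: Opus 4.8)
The plan is to exhibit an explicit wandering point on the boundary $\partial\ov X^{\max}$ for a torsion-free Anosov subgroup $\G\subset G$ that is not a cocompact lattice in a rank one Lie group. The key input is the work on compactifications of Anosov symmetric spaces \cite{KL18,GGKW15}: an Anosov subgroup acts properly discontinuously and cocompactly on a certain domain of discontinuity in (a suitable piece of) the maximal Satake compactification, and its limit set $\Lambda(\G)\subseteq \partial\ov X^{\max}$ is a proper closed subset living in the ``most singular'' stratum (the Furstenberg boundary $G/P_\emptyset=\mc O_\emptyset$). Concretely, I would recall that by \cite{GGKW15,KL18} the action of $\G$ on $\ov X^{\max}\smallsetminus \Lambda(\G)$ is properly discontinuous, where $\Lambda(\G)$ is the limit set in the Furstenberg boundary; equivalently the domain $\Omega=\ov X^{\max}\smallsetminus \Lambda(\G)$ is open, $\G$-invariant, and the $\G$-action on it is properly discontinuous.

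First I would reduce the statement to the claim that $\Omega\cap\partial\ov X^{\max}\neq\emptyset$, because a properly discontinuous action has every point wandering (for $x\in\Omega$ choose a relatively compact neighborhood $U$ with $\ov U\subseteq\Omega$; proper discontinuity gives finiteness of $\{\g:\g U\cap U\neq\emptyset\}$), so $\Omega\subseteq\w(\G,\ov X^{\max})$, and then $\w(\G,\ov X^{\max})\cap\partial\ov X^{\max}\supseteq\Omega\cap\partial\ov X^{\max}$. Next I would verify $\Omega\cap\partial\ov X^{\max}\neq\emptyset$. The limit set $\Lambda(\G)$ is contained in the single boundary stratum $\mc O_\emptyset$ (the full flag manifold). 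Hence every point of the remaining boundary strata $\bigcup_{\emptyset\neq I\subsetneq\Pi}\mc O_I$ lies in $\Omega$, and these strata are nonempty as soon as $\Pi$ has at least two elements, i.e. whenever $\on{rank}(X)\geq 2$. This already settles the higher rank case. In the rank one case ($\Pi=\{\alpha\}$, so the only boundary stratum is $\mc O_\emptyset=\partial X$), I would instead use that $\Lambda(\G)\subsetneq\partial X$: an Anosov (= convex cocompact) subgroup of a rank one group which is not a cocompact lattice has limit set strictly smaller than the whole boundary sphere, so $\Omega\cap\partial X=\partial X\smallsetminus\Lambda(\G)\neq\emptyset$. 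Either way $\Omega\cap\partial\ov X^{\max}\neq\emptyset$, and combined with the previous paragraph this gives the wandering condition.

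I expect the main obstacle to be bookkeeping with the precise form of the domain of discontinuity from \cite{KL18,GGKW15}: one must make sure that the ``domain of proper discontinuity'' used there is indeed (open and dense in) the maximal Satake compactification rather than some other, possibly smaller, $G$-compactification modeled on a non-maximal Satake compactification, and that removing only the limit set in the Furstenberg boundary already yields proper discontinuity on all of $\ov X^{\max}$ (this is where the hypothesis that $\G$ is Anosov—hence ``antipodal'' and ``regular'', so its limit set is transverse and the relevant cocompactness/properness statements apply—is used). A secondary point requiring care is the rank one endpoint: one must justify that a convex cocompact $\G\subset G_0$ with $G_0$ of rank one, not a cocompact lattice, has $\Lambda(\G)\neq\partial X$; this is classical (if $\Lambda(\G)=\partial X$ then the convex core is all of $X$ modulo $\G$, forcing $\G\backslash X$ compact, hence $\G$ a cocompact lattice), but it must be invoked explicitly since the statement's exclusion ``not a cocompact lattice in a rank one Lie group'' is exactly tailored to it. Once these references are pinned down the argument is short.
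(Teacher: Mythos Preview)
Your overall framework matches the paper's: invoke \cite{KL18,GGKW15} to obtain an open $\G$-invariant $\Omega$ with $X\subseteq\Omega\subseteq\ov X^{\max}$ on which $\G$ acts properly discontinuously, observe that proper discontinuity forces $\Omega\subseteq\w(\G,\ov X^{\max})$, and then argue that $\Omega\cap\partial\ov X^{\max}\neq\emptyset$. The divergence is entirely in this last step.

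Your argument for $\Omega\cap\partial\ov X^{\max}\neq\emptyset$ rests on the assertion that $\Omega=\ov X^{\max}\smallsetminus\Lambda(\G)$ with $\Lambda(\G)\subseteq\mc O_\emptyset$, so that in rank $\geq 2$ any higher stratum $\mc O_I$ with $I\neq\emptyset$ already lies in $\Omega$. This is not what \cite{KL18,GGKW15} actually give you: the domains of discontinuity there are obtained by removing a \emph{thickening} of the limit set (a union of certain Weyl cones or Schubert-type cells determined by a balanced ideal), not just $\Lambda(\G)$ itself, and this thickening typically meets many boundary strata, not only $\mc O_\emptyset$. In higher rank the naive complement of the limit set is generally \emph{not} a domain of proper discontinuity---this is precisely the obstacle that the thickening construction overcomes. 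You correctly flag this as the main point requiring care, but the resolution you anticipate does not hold in the form you state it, so the rank $\geq 2$ case as written has a genuine gap.

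The paper sidesteps this entirely by using the second half of the \cite{KL18,GGKW15} conclusion, namely that $\G$ acts \emph{cocompactly} on $\Omega$. If $\Omega\cap\partial\ov X^{\max}=\emptyset$ then $\Omega=X$, so $\G\backslash X$ would be compact, contradicting the hypothesis (in rank one directly; in higher rank because Anosov subgroups, being word hyperbolic, are never cocompact lattices). This one-line argument requires no knowledge of the shape of $\partial\ov X^{\max}\smallsetminus\Omega$ and makes your case split on the rank unnecessary.
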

\begin{proof}
 By \cite{KL18} (and \cite{GGKW15} for a specific maximal parabolic subgroup) every locally symmetric space arising from an  Anosov  subgroup admits a compactification modeled on the maximal Satake compactification $\ov X^{\max}$, i.e. there is $X\subseteq \Omega \subseteq \ov X^ {\max}$ open such that $\G$ acts properly discontinuously and cocompactly on $\Omega$.
Since $\Gamma$ does not act cocompactly on $X$, we have $ \Omega \cap \partial \ov X^ {\max} \neq \emptyset$.
As every point in a region of discontinuity is wandering by definition we have $\Omega \subseteq \w(\G,\ov X^ {\max})$, and in particular the wandering condition is fulfilled.
\end{proof}

Combining the above proposition with Theorem~\ref{thm:main_intro} we obtain the following corollary.
\begin{korollar}
Let $\G$ be a torsion-free Anosov subgroup that is not a cocompact lattice in a rank one Lie group.
  Then there are no principal joint $L^2$-eigenvalues on $\G\backslash X$.
\end{korollar}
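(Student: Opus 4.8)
The plan is to deduce the corollary directly from the two results that immediately precede it, so the argument is essentially a one-line combination. First I would invoke the Proposition just established: since $\G$ is a torsion-free Anosov subgroup that is not a cocompact lattice in a rank one Lie group, it satisfies the wandering condition $\w(\G,\ov X^{\max})\cap \partial \ov X^{\max}\neq\emptyset$ for the maximal Satake compactification. The substantive input behind this is the existence, due to Kapovich--Leeb \cite{KL18} (and \cite{GGKW15} in the relevant maximal-parabolic case), of a domain $X\subseteq\Omega\subseteq\ov X^{\max}$ on which $\G$ acts properly discontinuously and cocompactly, together with the observation that $\Omega$ must meet $\partial\ov X^{\max}$ because $\G$ does not act cocompactly on $X$ itself; every point of a region of proper discontinuity is wandering, so $\Omega\subseteq\w(\G,\ov X^{\max})$.

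Next I would feed this into Theorem~\ref{thm:main_intro}, taking $\ov X$ to be the maximal Satake compactification $\ov X^{\max}$, which is one of the two compactifications admitted by that theorem. The hypothesis of Theorem~\ref{thm:main_intro} is precisely the nonemptiness of $\w(\G,\ov X^{\max})\cap\partial\ov X^{\max}$ that we have just verified, so its conclusion applies verbatim: there are no principal joint $L^2$-eigenvalues on $\G\backslash X$, i.e. $\sigma(\G\backslash X)\cap i\mf a^\ast=\emptyset$. (Recall that $\G$ is torsion-free by assumption, which is needed for $\G\backslash X$ to be a manifold and for the moderate-growth and vanishing theorems to apply.)

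Since the deduction is formal, there is no genuine obstacle at this stage; the real analytic and dynamical work is already packaged into Theorems~\ref{thm:modgrowth}, \ref{thm:smoothsquarevanish} and \ref{thm:satakesquarevanish} and into the cited structure theory for Anosov symmetric spaces. For completeness I would only double-check the compatibility point that ``not a cocompact lattice in a rank one Lie group'' is exactly what forces $\Omega\cap\partial\ov X^{\max}\neq\emptyset$: an Anosov subgroup of a rank one group is convex-cocompact, and such a group acts cocompactly on $X$ only when it is a cocompact lattice, which is the excluded case; in all other cases $\G$ fails to act cocompactly on $X$, and hence the cocompact domain $\Omega$ genuinely extends into the boundary. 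With this checked, the corollary follows.
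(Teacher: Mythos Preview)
Your proposal is correct and mirrors the paper's own treatment exactly: the corollary is stated immediately after the sentence ``Combining the above proposition with Theorem~\ref{thm:main_intro} we obtain the following corollary,'' with no further proof given. Your additional compatibility check about why ``not a cocompact lattice in rank one'' forces $\Omega\cap\partial\ov X^{\max}\neq\emptyset$ is really a justification of a step already absorbed into the preceding Proposition rather than of the corollary itself, but it does no harm.
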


It is worth mentioning that selfjoinings of two representations into $PSO(n,1)$ yield Anosov subgroups if and only if one of the images of the representations is convex-cocompact. One can thus easily construct non-trivial examples which are not Anosov subgroups but fulfill the wandering condition of Theorem~\ref{thm:main_intro}. This is again parallel to Patterson's result that holds beyond the convex-cocompact case for hyperbolic surfaces admitting cusps and at least one funnel.

\section*{Declaration}
This work has received funding from the Deutsche Forschungsgemeinschaft (DFG) via Grant No. WE 6173/1-1 (Emmy Noether group “Microlocal Methods for Hyperbolic Dynamics”) as well as SFB-TRR 358/1 2023 — 491392403 (CRC ``Integral Structures in Geometry and Representation Theory'').

    The authors have no relevant financial or non-financial interests to disclose.

    The authors have no competing interests to declare that are relevant to the content of this article.

    All authors certify that they have no affiliations with or involvement in any organization or entity with any financial interest or non-financial interest in the subject matter or materials discussed in this manuscript.

    The authors have no financial or proprietary interests in any material discussed in this article.

%\newpage
\bibliographystyle{amsalpha}
\bibliography{literatur}

\bigskip
\bigskip

\end{document}